\title{Local rigidity of homogeoenous actions of parabolic subgroups
 of rank-one Lie groups}
\author{Masayuki ASAOKA
\footnote{Partially supported by JSPS Grant-in-Aid for Young
Scientists (B), No.19740085.}\\
Department of Mathematics, Kyoto University
}
\def\RR{\mathbb{R}}
\def\cF{{\cal F}}
\def\cA{{\cal A}}
\def\SO{{SO_+(n,1)}}
\def\so{{\mathfrak{so}(n,1)}}
\DeclareMathOperator{\Fr}{Fr}
\def\bs{{\backslash}}
\def\ra{{\rightarrow}}
\def\st{{\;|\;}}
\def\hsp{{\hspace{3mm}}}
\theoremstyle{plain}
\newtheorem{thm}{Theorem}[section]
\newtheorem{prop}[thm]{Proposition}
\newtheorem{lemma}[thm]{Lemma}
\theoremstyle{definition}
\theoremstyle{remark}
\begin{document}
\maketitle

\begin{abstract}
We show the local rigidity of the natural action
 of the Borel subgroup of $SO_+(n,1)$
 on a cocompact quotient of $SO_+(n,1)$
 for $n \geq 3$.
\end{abstract}

\section{Introduction}
\label{sec:introduction}

%
%   Introduction
%
%\subsection{Local rigidity of actions}
Rigidity theory of actions of non-compact groups
 has been rapidly developed in the last two decades.
It is found that
 many actions related to Lie groups of real-rank greater than one
  exhibit rigidity.
%\cite{FM09,Ka96,KS97,KLZ96,MQ01,NT95,NT01},
See Fisher's survey paper \cite{F07}, for example.
However, there are only few results on actions
 related to Lie groups of real-rank one.
The aim of this paper is to show the local rigidity
 of some natural actions related to such groups.

Let $G$ be a Lie group and $M$ be a $C^\infty$ manifold.
By $C^\infty(M \times G,M)$,
 we denote the space of $C^\infty$ maps from $M \times G$ to $M$
 with the compact-open $C^\infty$-topology.
Let $\cA(M,G)$ be the set of $C^\infty$ right actions of $G$ on $M$.
It is a closed subset of $C^\infty(M \times G,M)$.
We say two actions $\rho_1:M_1 \times G \ra M_1$
 and $\rho_2:M_2 \times G \ra M_2$ are {\it $C^\infty$-conjugate}
 if there exists a $C^\infty$ diffeomorphism $h$
 and an automorphism $\sigma$ of $G$ such that
 $h(\rho_1(x,g))=\rho_2(h(x),\sigma(g))$
 for any $x \in M_1$ and $g \in G$.
An action $\rho \in \cA(M,G)$ is  called
 {\it $C^\infty$-locally rigid} if
 the $C^\infty$-conjugacy class of $\rho$
 is a neighborhood of $\rho$ in $\cA(M,G)$.
We say an action $\rho \in \cA(M,G)$ is {\it locally free}
 if the isotropy subgroup
 $\{g \in G \st \rho(x,g)=x\}$
 is a discrete subgroup of $g$ for any $x \in M$.

Let $H$ be its closed subgroup of a Lie group $G$
 and $\Gamma$ a cocompact lattice of $G$.
We define {\it the standard $H$-action} $\rho_0$ on $\Gamma \bs G$
 by $\rho_0(\Gamma g,h) =\Gamma(gh)$.
It is a locally free action.
We say an action is {\it homogeneous} if it is $C^\infty$-conjugate
 to the standard action associated with some cocompact lattice.

Suppose that the Lie group $G$ is connected and semi-simple.
Let $G=KAN$ be its Iwasawa decomposition.
The dimension of the abelian subgroup $A$ is called
 {\it the real-rank} of $G$.
Let $M$ be the centralizer of $A$ in $K$.
The group $P=MAN$ is called {\it the Borel subgroup} associated
 with the Iwasawa decomposition $G=KAN$.
It is known that the conjugacy class of the Borel subgroup
 does not depend  on the choice of the Iwasawa decomposition.
When $G=SL(2,\RR)$ for example,
 a Borel subgroup $P$ is conjugate to
 the group $GA$ of the upper triangular matrices
 in $SL(2,\RR)$.\footnote{It is isomorphic to
 the \underline{G}roup of orientation preserving
 \underline{A}ffine transformations of the real line.}
Fix a cocompact lattice $\Gamma$ of $SL(2,\RR)$ and
 put $M_\Gamma=\Gamma \bs SL(2,\RR)$.
Let $\rho_0$ be the standard $P$-action on $M_\Gamma$.
It is not locally rigid
 since deformation of lattice $\Gamma$
 gives a non-trivial deformation of actions.
However, there are several rigidity results on $\rho_0$
Ghys \cite{Gh79} proved that if a locally free $GA$-action on $M_\Gamma$
 admits an invariant volume, then it is homogeneous.
In \cite{Gh85}, he remove the assumption on invariant volume
 when $H^1(M_\Gamma)$ is trivial.
The author of this paper \cite{As07} classified
 all locally free action of $P$ on $M_\Gamma$
 up to $C^\infty$-conjugacy without any assumption.
As a consequence,
 there exists a non-homogeneous locally free $GA$-action
 on $M_\Gamma$ when $H^1(M_\Gamma)$ is non-trivial.
In a forthcoming paper, he will also
 show that the standard $GA$-action $\rho_0$ admits
 a $C^\infty$ deformation into non-homogeneous actions in this case.

%When $G$ is $SL(2,\CC)$,
% the Borel subgroup $P$ is conjugate to
% the group $GA_\CC$ of upper traiangle matrices, similar to $SL(2,\RR)$.
%In \cite{GV94},
% Ghys and Verjovsky showed that
% if a holomorphic locally free action of $GA_\CC$
% admits a $C^0$-invariant volume, then it is homogeneous.
%Ghys \cite{Gh95} also classified `holomorphic Anosov systems'
% on complex three-dimensional manifolds.
%As a consequence of his result,
% any holomorphic action sufficiently $C^1$-close to $\rho_\Gamma$
% is holomorphically conjugate to $\rho_\Gamma$.

By Mostow's rigidity theorem,
 any deformation of a cocompact lattice is trivial
 if $G$ is a higher-dimensional Lie group of real-rank one.
So, it is natural to ask
 whether the standard $P$-action
 is $C^\infty$-locally rigid or not in this case.
The main result of this paper answers this question
 when $G$ is $SO_+(n,1)$.
\begin{thm}
\label{thm:main thm} 
Let $P$ be a Borel subgroup of $SO_+(n,1)$
 and $\Gamma$ be a torsion-free cocompact lattice of $SO_+(n,1)$.
If $n \geq 3$, then the standard $P$-action on $\Gamma \bs SO_+(n,1)$
 is $C^\infty$-locally rigid.
\end{thm}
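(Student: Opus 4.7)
The plan is to leverage the hyperbolic structure of the $A$-subaction of $\rho_0$. The one-parameter subgroup $A \subset P$ generates an Anosov flow on $\Gamma \bs \SO$ whose tangent splitting comes from the root space decomposition of $\so$: the $N$-orbits form the strong unstable foliation, the $N^-$-orbits (with $N^-$ the opposite horospherical subgroup) form the strong stable foliation, and the $MA$-orbits form the weak centre foliation. For any perturbation $\rho$ sufficiently close to $\rho_0$ in $\cA(\Gamma \bs \SO, P)$, the $A$-subflow remains Anosov with the same splitting dimensions, so by structural stability and the Hirsch--Pugh--Shub theory of normally hyperbolic foliations the three dynamical foliations persist, are close to the originals, have $C^\infty$ leaves, and vary continuously in the transverse direction. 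Moreover, because the algebraic relation $aNa^{-1}=N$ with uniform expansion on $\mathfrak{n}$ is preserved by any $P$-action, the orbit foliation of the perturbed $N$-subaction must coincide with the strong unstable foliation of the perturbed $A$-subflow, matching algebraic and dynamical structures at the outset.

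Once the foliations are identified, the conjugacy is built in stages. Structural stability yields a H\"older orbit equivalence $h_0:\Gamma\bs\SO \to \Gamma\bs\SO$ between the two $A$-flows, close to the identity, which by the previous step sends $N$-orbits to $N$-orbits. Using the simple transitivity of $N$ on its orbits, one reparametrizes $h_0$ leafwise to obtain an $N$-equivariant map $h_1$ (modulo an automorphism of $N$ to be absorbed into the $\sigma$ of the definition of $C^\infty$-conjugacy). Averaging $h_1$ over the compact group $M = SO(n-1)$ extends the equivariance to $MN$, using that $M$ centralizes $A$ and normalizes $N$. The remaining $A$-equivariance is arranged by solving a cohomological equation over the $A$-flow. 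Smoothness of the final conjugacy $h$ follows from combining $N$-equivariance (smoothness along $N$-orbits), the smoothness of the $A$-time change (smoothness along $A$-orbits), and a Journ\'e-type regularity lemma that converts leafwise smoothness along the stable and unstable foliations into global $C^\infty$-smoothness.

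The principal obstacle is the cohomological step, which amounts to solving a transfer equation $X_A u = f$ for the generator $X_A$ of the $A$-flow with tame Sobolev estimates, uniformly in the perturbation. In rank one this equation is not automatically solvable: the Laplace spectrum on $\Gamma\bs\SO$ typically contains complementary-series contributions producing distributional $A$-invariants that obstruct $C^\infty$ solutions. The restriction $n\geq 3$ is essential on both sides of the argument. First, Mostow rigidity prevents lattice deformations from producing non-trivial deformations of $\rho_0$, as indeed occurs for $n=2$. Second, the non-trivial compact subgroup $M=SO(n-1)$ supplies a symmetry that decomposes the cohomological problem into $M$-isotypic components, on most of which the complementary-series obstruction vanishes. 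Assembling these ingredients and running a Hamilton or Nash--Moser type implicit-function scheme to produce the smooth conjugacy is the technical heart of the proof.
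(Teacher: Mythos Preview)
Your proposal contains a concrete error and a genuine gap, and it diverges substantially from the paper's argument.

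First, the $A$-subflow on $\Gamma\bs\SO$ is \emph{not} Anosov when $n\geq 3$: since $M=SO(n-1)$ centralizes $A$, the $\mathfrak{m}$-directions are neutral and the flow is only partially hyperbolic. The paper handles this at the outset by invoking Palais' stability to arrange that the perturbed action agrees with $\rho_0$ on $SO(n-1)$, and then \emph{quotients} by $SO(n-1)$ to obtain genuine Anosov flows $\Phi_{\rho_0},\Phi_\rho$ on $N_\Gamma=\Gamma\bs\SO/SO(n-1)$. Working with partially hyperbolic dynamics and then ``averaging over $M$'' as you suggest is not the same manoeuvre and would require substantially more care.

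Second, and more seriously, your ``technical heart'' is left open. You correctly note that the rank-one cohomological equation over the $A$-flow has genuine obstructions coming from complementary-series representations, but your proposed remedy (that on ``most'' $M$-isotypic components the obstruction vanishes) is neither made precise nor shown to suffice for the remaining components, and no tame estimates for a Nash--Moser scheme are supplied. As stated this is speculation, not a proof.

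The paper avoids this entire cohomological difficulty. Its mechanism is conformal rigidity of Anosov flows: the $SO(n-1)$-equivariance forces the induced flow $\Phi_\rho$ on $N_\Gamma$ to be $s$-conformal; an intermediate-flow argument together with the Livschitz theorem and the fact that the unperturbed flow is contact shows the relevant flows are also $u$-conformal; and then de la Llave's theorem upgrades a topological conjugacy (produced by a contraction-mapping argument along the strong unstable foliation) to a $C^\infty$ one. No Nash--Moser iteration and no representation theory enter. The hypothesis $n\geq 3$ is used not via Mostow rigidity or nontriviality of $M$ for spectral purposes, but because de la Llave's theorem requires the strong stable bundle to have dimension at least two, i.e.\ $n-1\geq 2$. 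Finally, a frame-bundle argument lifts the smooth conjugacy of flows on $N_\Gamma$ back to a $P$-conjugacy on $M_\Gamma$.
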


To ending the introduction,
 we remark on the local rigidity of the orbit foliation.
Let $\cF_\Gamma$ be the orbit foliation
 of the standard $P$-action on $\Gamma \bs \SO$.
Ghys \cite{Gh93} showed a global rigidity result
 of $\cF_\Gamma$ for $n=2$.For $n \geq 3$,
Yue \cite{Yu95} proved a partial result
 and Kanai \cite{Ka99} claimed the local rigidity of $\cF_\Gamma$.
However, Kanai's proof contains a serious gap\footnote{
The $C^1$-regularity of the strong unstable foliation
 claimed in the last sentence of p.677 does not hold in general.}
 and it is not fixed so far.
Hence, the local rigidity of $\cF_\Gamma$ is still open.
If any foliation sufficiently close to $\cF_\Gamma$
 carries an action of $P$,
 then the local rigidity of $\cF_\Gamma$ follows
 from our theorem.

%
% Preliminaries
%

\section{Preliminaries}
%
%   Preliminaries
%

In this section,
 we introduce some notations
 and review several known facts which we will use 
 in the proof of Theorem \ref{thm:main thm}.

\subsection{The group $SO_+(n,1)$}
\label{sec:SO}
Fix $n \geq 3$
 and let $I_{n,1}$ be the diagonal matrix of size $(n+1)$
 whose diagonal elements are $1,\dots,1,-1$.
Let $\SO$ be the identity component of the subgroup
 of $GL(n+1,\RR)$ consisting of matrices $A$
  satisfying $^\tau\hspace{-1mm} A I_{n,1} A =I_{n,1}$.
For any $1 \leq n' \leq n$,
 the standard embedding $GL(n',\RR) \hookrightarrow GL(n+1,\RR)$
 induces an embedding of $SO(n')$ into $\SO$.

Let $\so$ be the Lie algebra of $\SO$.
By $E_{ij}$, we denote the square matrix of size $(n+1)$ 
 such that the $(i,j)$-entry is one and other entries are zero.
Put $X=E_{n(n+1)}-E_{(n+1)n}$,
 $Y_i=(E_{i(n+1)}+E_{(n+1)i})-(E_{in}-E_{ni})$, and
 $Y'_i=(E_{i(n+1)}+E_{(n+1)i})+(E_{in}-E_{ni})$
 for $i,j=1,\cdots,n$.
Then, $\so$ is generated by
 $X,Y_1,\cdots,Y_{n-1},Y'_1,\cdots,Y'_{n-1}$
 and the Lie subalgebra corresponding to the subgroup $SO(n-1)$ of $\SO$.
It is easy to check that
\begin{gather}
\label{eqn:Y 1}
[Y_i,X]  =-Y,\hsp [Y'_i,X]  =Y'_i,\hsp
[Y_i,Y_j]  = [Y'_i,Y'_j]  =0
\end{gather}
 for any $i,j=1,\cdots,n-1$, and
\begin{gather}
\mbox{Ad}_m(X)=X \nonumber\\
\label{eqn:Y 2}
 \mbox{Ad}_m(Y_1,\cdots,Y_{n-1})
 =(Y_1,\cdots,Y_{n-1}) \cdot m
\end{gather}
 and for any $m \in SO(n-1)$.

Let $\SO=KAN$ be 
the Iwasawa decomposition of $\SO$
 associated with the involution
 $\theta_0: g \mapsto (^\tau g)^{-1}$.
Then, $K=SO(n)$ and,
 $A$ and $N$ are the subgroups of $\SO$
 corresponding to the Lie subalgebras
 spanned by $X$ and $\{Y_1,\dots Y_{n-1}\}$, respectively.
Since the centralizer of $A$ in $SO(n)$ is $SO(n-1)$,
 the Borel subgroup corresponding to $\theta_0$ is $SO(n-1)AN$.

\subsection{Anosov flows}
A $C^1$ flow $\Phi$ on a closed manifold $M$ is called {\it Anosov},
 if it has no stationary points
 and there exists a continuous splitting
 $TM=T\Phi \oplus E^{ss} \oplus E^{uu}$, a constant $\lambda>0$,
 and a continuous norm $\|\cdot\|$ on $TM$ which satisfy
 the following properties:
\begin{itemize}
\item $T\Phi$ is
 the one-dimensional subbundle tangent to the orbit of $\Phi$.
\item $E^{ss}$ and $E^{uu}$ are $D\Phi$-invariant subbundles.
\item $\|D\Phi^t(v^s)\| \leq e^{-\lambda t}\|v^s\|$ and
 $\|D\Phi^t(v^u)\| \geq e^{\lambda t}\|v^u\|$
 for any $v^s \in E^{ss}$, $v^u \in E^{uu}$, and $t \geq 0$.
\end{itemize}
The subbundles $E^{ss}$, $E^{uu}$,
 $T\Phi \oplus E^{ss}$, and $T\Phi \oplus E^{uu}$ are called
 {\it the strong stable, strong unstable,
 weak stable}, and
 {\it weak unstable subbundles}, respectively.
It is known that they generate continuous foliations
 with $C^r$ leaves, if $\Phi$ is a $C^r$ flow.
The foliations are called {\it the strong stable foliation}, {\it etc.}

The following proposition
 may be well-known for experts, but we give a proof
 for convenience of the readers.
\begin{prop}
\label{prop:fol conj}
Let $\Phi_1$ and $\Phi_2$ be Anosov flows on a closed manifold $M$.
Suppose that  $\Phi_1$ and $\Phi_2$ have
 the common strong unstable foliation $\cF^{uu}$
 and $\cF^{uu}(\Phi_1^t(x)) = \cF^{uu}(\Phi_2^t(x))$
 for any $x \in M$ and $t \in \RR$.
Then, there exists a homeomorphism $h$ of $M$ such that
 $\Phi_2^t \circ h=h \circ \Phi_1^t$ for any $t \in \RR$
 and $h(\cF^{uu}(x))=\cF^{uu}(x)$ for any $x \in M$.
\end{prop}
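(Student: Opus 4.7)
The plan is to construct $h$ explicitly as the limit of the comparison maps $\chi^t:=\Phi_2^{-t}\circ\Phi_1^t$. As a preliminary observation, both flows preserve their common strong unstable foliation $\cF^{uu}$, so the hypothesis is equivalent to the statement $\chi^t(x)\in\cF^{uu}(x)$ for every $x$ and $t$, and the flow relations immediately yield the cocycle identity
\[
\chi^{t+s}(x)=\Phi_2^{-s}\bigl(\chi^t(\Phi_1^s(x))\bigr).
\]
Working in local charts adapted to $\cF^{uu}$ and differentiating the hypothesis at $t=0$ also shows that the difference $X_1-X_2$ of the generating vector fields is pointwise tangent to $\cF^{uu}$.

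The main step is a uniform bound on the leaf displacement $d^{uu}(x,\chi^t(x))$, where $d^{uu}$ is the intrinsic leaf metric. I plan to compute
\[
\frac{d}{dt}\chi^t(x)=D\Phi_2^{-t}\bigl((X_1-X_2)(\Phi_1^t(x))\bigr),
\]
and bound its norm by $Ce^{-\lambda t}\|X_1-X_2\|_\infty$ using the exponential contraction of $\Phi_2^{-t}$ on $T\cF^{uu}$. This is integrable in $t$, so the path-length of $s\mapsto\chi^s(x)$ inside $\cF^{uu}(x)$ is uniformly bounded by some constant $B<\infty$. Substituting $B$ back into the cocycle and applying the contraction once more yields the Cauchy estimate
\[
d^{uu}(\chi^{s+t}(x),\chi^s(x))\leq Ce^{-\lambda s}B
\]
uniformly in $t\geq 0$ and $x\in M$; completeness of the $\cF^{uu}$-leaves then makes $h(x):=\lim_{s\to+\infty}\chi^s(x)$ well-defined, continuous and leaf-preserving. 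The conjugacy identity $\Phi_2^t\circ h=h\circ\Phi_1^t$ is read off by passing to the limit in $\Phi_2^t(\chi^{s+t}(x))=\chi^s(\Phi_1^t(x))$.

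To upgrade $h$ to a homeomorphism I will carry out the symmetric construction of $h'$ by interchanging the roles of $\Phi_1$ and $\Phi_2$ throughout, and check that both $h\circ h'$ and $h'\circ h$ equal the identity. Each composition is a continuous self-map of $M$ that commutes with one of the flows, sends every point into its own $\cF^{uu}$-leaf, and is uniformly bounded in the leaf metric; if such a map had a non-zero leaf displacement at some point $y_0$, equivariance would exponentially amplify $d^{uu}(\Phi^t(y_0),g(\Phi^t(y_0)))$ along the forward orbit, contradicting the uniform bound. The main technical obstacle is the derivation and integration of the ODE for $\chi^t$—in particular the identification of $X_1-X_2$ as a vector field tangent to $\cF^{uu}$—since once that integrable velocity estimate is in place, everything else follows from standard contraction arguments.
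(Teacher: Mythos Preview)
Your argument is correct and is, at its core, the same contraction principle that the paper uses, but carried out by hand instead of by an appeal to the Banach fixed point theorem. The paper works in the complete metric space ${\cal H}$ of continuous leaf-preserving self-maps of $M$ with the sup-leafwise distance, observes that $\Theta_{ij}^t(h)=\Phi_i^{-t}\circ h\circ\Phi_j^t$ is a uniform contraction on ${\cal H}$ for large $t$, and takes $h$ to be the unique fixed point of $\Theta_{21}$; the identity $h_{21}\circ h_{12}=\mathrm{id}$ then drops out of uniqueness. Your $\chi^t=\Phi_2^{-t}\circ\Phi_1^t$ is exactly the Picard iterate $\Theta_{21}^t(\mathrm{id})$, and your ODE bound on $\tfrac{d}{dt}\chi^t$ is the infinitesimal form of the contraction estimate. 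The trade-off: the paper's proof is shorter and avoids the differentiation step entirely (one never needs to know that $X_1-X_2$ is tangent to $\cF^{uu}$, only that $\chi^t$ preserves leaves), and the invertibility of $h$ comes for free from uniqueness rather than from a separate expansion argument; your version is more explicit and in particular exhibits $h$ as a concrete limit, which can be useful if one later wants quantitative control on $h$ in terms of $\|X_1-X_2\|$.
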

\begin{proof}
Let ${\cal H}$ be the set of continuous maps $h:M \ra M$ which
 preserves each leaf of $\cF^{uu}$.
Fix a Riemannian metric $g$ of $M$.
Let $d^u$ be the leafwise distance on leaves of $\cF^{uu}$
 which is determined by the restriction of the metric $g$ to each leaf.
We define a distance $d$ on ${\cal H}$ by
 $d(h,h')=\sup_{x \in N_\Gamma}d^u_\Gamma(h(x),h'(x))$.
It is a complete metric on ${\cal H}$.

For $i,j \in \{1,2\}$,
 we define continuous flows $\Theta_{ij}$ on ${\cal H}$ by
 $\Theta_{ij}^t(h)=\Phi_i^{-t} \circ h \circ \Phi_j^t$.
Since $\Phi_i$ and $\Phi_j$ expand $\cF^{uu}$ uniformly,
 $\Theta_{ij}^t$ is a uniform contraction
 for any sufficiently large $t>0$.
By the contracting mapping theorem,
 there exists a unique fixed point $h_{ij} \in {\cal H}$ of
 the flow $\Theta_{ij}$.
Since both $h_{ij} \circ h_{ji}$ and the identity map of $M$
 are fixed point of $\Theta_{ii}$,
 $h_{ij} \circ h_{ji}$ is the identity map for $i,j \in {1,2}$.
In particular, $h_{ij}$ is the inverse of $h_{ji}$.
Therefore, $h_{21}$ is a homeomorphism in ${\cal H}$
 such that
 $h_{21} \circ \Phi_1^t=\Phi_2^T \circ h_{21}$ for any $t \in \RR$.
\end{proof}

Let $\Psi$ be a flow on a manifold $M$.
A $C^\infty$ function $\alpha$ on $M \times \RR$
 is {\it a cocycle over $\Psi$} if
 $\alpha(x,0)=0$ and $\alpha(x,t+t')=\alpha(x,t)+\alpha(\Psi^t(x),t')$
 for any $x \in M$ and $t,t' \in \RR$.
We say $\Psi$ is {\it topologically transitive}
 if there exists $x_0 \in M$
 whose orbit $\{\Psi^t(x_0) \st t \in \RR\}$ is a dense subset of $M$.
\begin{thm}
[The $C^\infty$ Livschitz Theorem \cite{LMM}] 
\label{thm:Livschitz}
Let $\Phi$ be a $C^\infty$ topologically transitive Anosov flow on 
 a closed manifold $M$
 and $\alpha$ be a $C^\infty$ cocycle over $\Phi$.
If $\alpha(x,T)=0$ for any $(x,t) \in M \times \RR$
 satisfying $\Phi^T(x)=x$,
 then there exists a $C^\infty$ function $\beta$ on $M$
 such that $\alpha(x,t)=\beta(\Phi^t(x))-\beta(x)$
 for any $x \in M$ and $t \in \RR$.
Moreover, if $\alpha$ is sufficiently $C^\infty$-close to $0$,
 then we can choose $\beta$ so that it is $C^\infty$-close to $0$.
\end{thm}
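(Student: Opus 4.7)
The plan is to follow the classical two-step strategy of de la Llave--Marco--Moriyón: first produce a continuous solution $\beta$ by the original Livschitz argument, then bootstrap its regularity via separate smoothness along the stable and unstable foliations.

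First, I would build a continuous $\beta$. By topological transitivity, pick $x_0 \in M$ with dense orbit and declare $\beta(\Phi^t(x_0)) = \alpha(x_0,t)$. The cocycle identity makes this consistent along the orbit. To extend $\beta$ to $M$, I would show it is uniformly continuous on the dense orbit, using the Anosov closing lemma as follows: if $\Phi^{t_1}(x_0)$ and $\Phi^{t_2}(x_0)$ are both close to some point $y \in M$, then the difference $\alpha(x_0,t_2)-\alpha(x_0,t_1) = \alpha(\Phi^{t_1}(x_0), t_2 - t_1)$ can be compared, via the closing lemma, to a sum of $\alpha$ evaluated on a nearby periodic orbit segment plus error terms that are Hölder-small because of exponential contraction on $E^{ss}$ and $E^{uu}$. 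Since periodic integrals vanish by hypothesis, the comparison gives $|\alpha(x_0,t_2)-\alpha(x_0,t_1)| \leq C\,\mathrm{dist}(\Phi^{t_1}(x_0),\Phi^{t_2}(x_0))^\sigma$ for some $\sigma>0$, and $\beta$ extends to a Hölder continuous function on $M$ satisfying $\alpha(x,t)=\beta(\Phi^t(x))-\beta(x)$.

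Next, I would prove $\beta$ is $C^\infty$ along the strong stable and strong unstable foliations. For $y \in \cF^{ss}(x)$, the cocycle identity gives
\begin{equation*}
\beta(y)-\beta(x) = \alpha(x,t)-\alpha(y,t) + \beta(\Phi^t(y))-\beta(\Phi^t(x)),
\end{equation*}
and since $\Phi^t(y),\Phi^t(x)$ converge together exponentially while $\beta$ is Hölder, letting $t\to\infty$ yields
\begin{equation*}
\beta(y)-\beta(x) = \sum_{k=0}^{\infty}\bigl[\alpha(\Phi^k(x),1)-\alpha(\Phi^k(y),1)\bigr].
\end{equation*}
Each term and each of its derivatives along $\cF^{ss}$ decays geometrically because $\alpha$ is $C^\infty$ and $D\Phi^k|_{E^{ss}}$ contracts exponentially; termwise differentiation therefore converges uniformly on stable leaves, giving $\beta|_{\cF^{ss}}\in C^\infty$ with uniform bounds. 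The symmetric argument with $t\to -\infty$ handles $\cF^{uu}$, and smoothness along the flow direction is immediate since $\partial_t \beta(\Phi^t(x))=\partial_t \alpha(x,t)$.

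Finally, I would invoke the Journé regularity lemma: a function that is uniformly $C^\infty$ along two transverse Hölder foliations whose leaves are $C^\infty$, together with its variation being $C^\infty$ along the flow, is $C^\infty$ on $M$. Applied to the weak-stable and strong-unstable (or to all three one-dimensional-step regularities combined), this upgrades $\beta$ to $C^\infty$. For the closeness statement, I would track the constants in each estimate linearly in $\|\alpha\|_{C^k}$: the Hölder norm of $\beta$ from step one, the geometric series in step two, and the Journé synthesis in step three all depend linearly on $\alpha$, so $\beta$ is $C^\infty$-small whenever $\alpha$ is.

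The main obstacle is the regularity bootstrap, specifically the rigorous passage from leafwise smoothness to global smoothness via Journé's lemma; this requires careful uniform control over the $C^k$ norms of $\beta$ restricted to leaves, because the Hölder regularity of the foliations $\cF^{ss},\cF^{uu}$ prevents a naive direct argument. Everything else is a straightforward extension of the standard Livschitz proof.
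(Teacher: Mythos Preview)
The paper does not prove this theorem: it is quoted as a known result from de~la~Llave--Marco--Moriy\'on \cite{LMM}, so there is no ``paper's own proof'' to compare against. Your sketch is essentially the argument of that reference (H\"older solution via the Anosov closing lemma, leafwise smoothness from the exponentially convergent telescoping series, then Journ\'e's lemma to globalize), and it is correct in outline.

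One point worth tightening: Journ\'e's lemma applies to a pair of transverse foliations with uniformly $C^\infty$ leaves, but here you have three directions. The standard fix is to first combine flow-direction smoothness with strong-stable smoothness to get uniform $C^\infty$ regularity along the weak-stable leaves (these are genuine $C^\infty$ submanifolds), and then apply Journ\'e to the pair $(\cF^s,\cF^{uu})$. Your parenthetical remark gestures at this but should be made explicit. The closeness statement is also slightly more delicate than ``linear in $\|\alpha\|_{C^k}$'': the Journ\'e synthesis is not literally a bounded linear operator, but the estimates are nonetheless uniform and go to zero with $\alpha$, which is all that is needed.
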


We say an Anosov flow $\Phi$ is {\it $s$-} ({\it resp.} $u$-){\it conformal}
 if $D\Phi^t$ is conformal on $E^{ss}(x)$ ({\it resp. }$E^{uu}(x)$)
 for any $x \in M$
 with respect to some continuous metric\footnote{We may assume that
 the metric is H\"older continuous
 and $C^\infty$ along leaves of the strong stable foliation.
 See Sadovskaya \cite{Sa05}.} on $E^{ss}$.
The following result plays fundamental role
 in the proof of Theorem \ref{thm:main thm}.
\begin{thm}
[de la Llave \cite{Ll}] 
\label{thm:Llave}
Let $\Phi_1$ and $\Phi_2$ be $C^\infty$ $s$-conformal
 topologically transitive Anosov flows on a closed manifold $M$.
For $i=1,2$, let $\cF^{ss}_i$ be  of $\Phi_1$.
Suppose that the dimensions of the strong stable foliation
 of $\Phi_1$ and $\Phi_2$ are greater than one.
If a homeomorphism $h$ of $M$ satisfies
 $\Phi_2^t \circ h = h \circ \Phi_1^t$ for any $t \in \RR$,
 then the restriction of $h$ to a leaf of
 the strong stable foliation of $\Phi_1$
 is a $C^\infty$ diffeomorphism to a leaf of
 the strong stable foliation of $\Phi_2$.
Moreover,  if both $\Phi_1$ and $\Phi_2$ are $u$-conformal in addition,
 then $h$ is a $C^\infty$ diffeomorphism of $M$.
\end{thm}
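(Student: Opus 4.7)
The plan is to exploit $s$-conformality to linearize $\Phi_i$ on strong stable leaves, reduce the problem on each leaf to a rigidity statement for topological conjugacies between linear conformal contractions, and finally invoke Liouville's theorem on conformal maps in dimension $\geq 2$.

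First I would check that $h$ carries $\cF^{ss}_1$ to $\cF^{ss}_2$. The strong stable leaf through $x$ is characterized dynamically as the set of $y$ for which $d(\Phi_1^t(x),\Phi_1^t(y)) \to 0$ exponentially as $t \to +\infty$, and this characterization is preserved by $h$ since $h \circ \Phi_1^t = \Phi_2^t \circ h$ and $M$ is compact. Next, I would apply a non-stationary linearization of Guysinsky--Katok type: because $D\Phi_i^t$ acts conformally on $E^{ss}_i$ with respect to a metric $C^\infty$ along leaves, there is a Hölder family of $C^\infty$ diffeomorphisms $\Psi_{i,x}:E^{ss}_i(x)\to\cF^{ss}_i(x)$ with $\Psi_{i,x}(0)=x$ and $D\Psi_{i,x}(0)=\mathrm{id}$ satisfying
\[
 \Phi_i^t \circ \Psi_{i,x}
  = \Psi_{i,\Phi_i^t(x)} \circ D\Phi_i^t(x)|_{E^{ss}_i(x)}.
\]
In these charts, the dynamics on each stable leaf is realized as its linear part, which is a conformal contraction of Euclidean space.

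Fix $x$ and $y=h(x)$. Conjugating by $\Psi_{1,x}$ and $\Psi_{2,y}$ turns $h|_{\cF^{ss}_1(x)}$ into a homeomorphism $\tilde h:E^{ss}_1(x)\to E^{ss}_2(y)$ satisfying $\tilde h\circ A = B\circ \tilde h$, where $A=D\Phi_1^1(x)|_{E^{ss}_1(x)}$ and $B=D\Phi_2^1(y)|_{E^{ss}_2(y)}$ are conformal linear contractions of Euclidean spaces of common dimension $d\geq 2$. The task reduces to showing $\tilde h$ is $C^\infty$. The standard argument is to use the functional equation $\tilde h(v)=B^{-n}\tilde h(A^n v)$: because $A$ and $B$ are conformal, blowing up by $B^{-n}\circ\cdot\circ A^n$ rescales without distortion, so the limit of the suitably normalized iterates produces a conformal linear intertwiner $T$ of $A$ with $B$, and one shows $\tilde h=T$ up to a symmetry of $A$. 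This uniqueness step is where the dimension assumption $d\geq 2$ is essential, via Liouville's theorem: in dimension one, any orientation-preserving homeomorphism conjugates two scalar contractions of the same ratio, but in higher dimensions the group of conformal symmetries is rigid enough to pin $\tilde h$ down to a Möbius map, hence to a $C^\infty$ diffeomorphism.

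For the "moreover" part, $u$-conformality lets the same argument run on $\cF^{uu}_i$, yielding $C^\infty$ smoothness of $h$ along strong unstable leaves. Smoothness along orbits is automatic from $h\circ\Phi_1^t=\Phi_2^t\circ h$, so $h$ is $C^\infty$ along the three transverse smooth foliations $\cF^{ss}_1$, $\cF^{uu}_1$, and the flow. Journ\'e's regularity lemma (applied twice, first to combine the flow and strong unstable directions into the weak unstable foliation, then to combine that with the strong stable) upgrades this to global $C^\infty$ smoothness of $h$ on $M$.

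The main obstacle is the third step: the rigidity of a topological conjugacy between conformal linear contractions. This is precisely where conformality and the hypothesis $\dim E^{ss}>1$ enter in an essential way, and it is the step that has no counterpart for general (non-conformal) Anosov flows or for one-dimensional stable bundles.
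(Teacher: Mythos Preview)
The paper does not prove this statement: Theorem~\ref{thm:Llave} is quoted as a result of de~la~Llave \cite{Ll} and used as a black box in Section~3.3, so there is no in-paper argument to compare against. Your outline is in fact close to the strategy in \cite{Ll} and the related work of Sadovskaya \cite{Sa05}: non-stationary linearization along stable leaves exploiting conformality, a Liouville-type rigidity for the linearized conjugacy, and Journ\'e's lemma to pass from leafwise to global smoothness.

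That said, there is a genuine slip in your third step. The equation $\tilde h\circ A=B\circ\tilde h$ with $A=D\Phi_1^1(x)|_{E^{ss}_1(x)}$ and $B=D\Phi_2^1(y)|_{E^{ss}_2(y)}$ is only valid when $x$ is fixed by $\Phi_1^1$. For a general $x$ the linearization sends the chart at $x$ to the chart at $\Phi_1^t(x)$, and what you actually obtain is the non-stationary relation
\[
  \tilde h_{\Phi_1^t(x)}\circ D\Phi_1^t(x)\;=\;D\Phi_2^t(h(x))\circ\tilde h_x,
  \qquad \tilde h_z=\Psi_{2,h(z)}^{-1}\circ h\circ\Psi_{1,z},
\]
with the base point varying along the orbit. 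Your blow-up formula $\tilde h(v)=B^{-n}\tilde h(A^n v)$ and the appeal to Liouville for a single self-conjugacy therefore do not apply as stated. The correct argument is that the family $\{\Psi_{1,z}\}$ endows each stable leaf with a canonical conformal (in fact affine) structure, the non-stationary equation forces each $\tilde h_x$ to be conformal with respect to these structures, and \emph{then} Liouville in dimension $\geq 2$ yields that $\tilde h_x$ is M\"obius, hence $C^\infty$. This is precisely what de~la~Llave does, and it also requires some uniform control on the family $\{\tilde h_z\}$ (H\"older continuity of the linearizations and of $h$) to make the limiting argument work. The gap is reparable along these lines, but it is the crux of the proof, not a notational shortcut.
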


We say that an Anosov flow is {\it contact}
 if it preserves a $C^1$-contact structure.
It is easy to see that
 any contact structure invariant under an Anosov flow
 is the direct sum of the strong stable subbundle
 and the strong unstable subbundle.
\begin{prop}
\label{prop:contact Anosov} 
Let $\Phi$ be a contact Anosov flow on a closed manifold $M$.
If $\Phi$ is $s$-conformal, then it is $u$-conformal.
\end{prop}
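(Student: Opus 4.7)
The plan is to use the invariant contact form $\alpha$ to transfer the $s$-conformal metric on $E^{ss}$ to a metric on $E^{uu}$ via the symplectic pairing induced by $d\alpha$, and to verify that the resulting metric witnesses $u$-conformality.

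Fix a $C^1$ contact form $\alpha$ whose kernel is the $\Phi$-invariant contact distribution $\xi = E^{ss}\oplus E^{uu}$. Since $\Phi^t$ preserves $\xi$, there is a nowhere-vanishing continuous cocycle $f_t$ with $\Phi^{t*}\alpha = f_t\alpha$. Differentiating and using that $df_t\wedge\alpha$ annihilates pairs of vectors in $\xi$ yields the key transformation rule
\[
(d\alpha)_{\Phi^t(x)}(D\Phi^t u,D\Phi^t v)=f_t(x)\,(d\alpha)_x(u,v) \quad \text{for } u,v\in\xi_x.
\]

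The first and only genuinely dynamical step is to establish that $\omega := d\alpha|_\xi$ restricts to a non-degenerate pairing $E^{ss}\times E^{uu}\to\RR$; equivalently, that $E^{ss}$ is Lagrangian in $(\xi,\omega)$. This is a standard consequence of the contact Anosov structure: the transformation rule together with the exponential contraction $\|D\Phi^t|_{E^{ss}}\|\leq Ce^{-\lambda t}$ and the uniform boundedness of $\omega$ forces $|f_t(x)|\to 0$ exponentially at every point where $\omega|_{E^{ss}}\neq 0$; combined with the volume identity $\Phi^{t*}(\alpha\wedge(d\alpha)^n) = f_t^{n+1}(\alpha\wedge(d\alpha)^n)$ (whose integral over $M$ is constant in $t$) and the topological transitivity of contact Anosov flows, this is untenable on any non-empty $\Phi$-invariant open set, so $\omega|_{E^{ss}}\equiv 0$. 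Hence $\omega$ induces an isomorphism $E^{uu}_x\to(E^{ss}_x)^*$ via $v\mapsto\omega_x(\cdot,v)$.

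Transport the $s$-conformal metric $g$ on $E^{ss}$ to a continuous metric $h$ on $E^{uu}$ by taking dual inner products through this isomorphism: for any $g_x$-orthonormal basis $\{e_i\}$ of $E^{ss}_x$,
\[
h_x(v,v') := \sum_i \omega_x(e_i,v)\,\omega_x(e_i,v'), \qquad v,v' \in E^{uu}_x.
\]
This is basis-independent and positive definite. To verify $u$-conformality, let $c_t(x) > 0$ denote the conformal factor of $D\Phi^t|_{E^{ss}}$; then $\{c_t(x)^{-1}D\Phi^t e_i\}$ is $g_{\Phi^t(x)}$-orthonormal, and substituting this basis into the definition of $h_{\Phi^t(x)}$ and applying the key transformation rule gives
\[
h_{\Phi^t(x)}(D\Phi^t v,D\Phi^t v') = \bigl(f_t(x)/c_t(x)\bigr)^2\,h_x(v,v'),
\]
so $D\Phi^t$ acts conformally on $E^{uu}$ with respect to $h$ with conformal factor $|f_t|/c_t$, establishing $u$-conformality. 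The main obstacle is the Lagrangian property of $E^{ss}$; the rest is a linear-algebra computation that falls out of the construction.
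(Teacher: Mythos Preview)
Your overall strategy is exactly the paper's: use the symplectic form $\omega=d\alpha|_\xi$ to dualize the $s$-conformal metric on $E^{ss}$ into a metric on $E^{uu}$, then check conformality directly. The metric $h$ you write down and the final verification match the paper's $g_+$ and its computation line for line.

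The one substantive difference is that the paper first normalizes the contact form by requiring $\alpha(X)=1$, where $X$ generates $\Phi$. Since $\ker\alpha=E^{ss}\oplus E^{uu}$ is $\Phi$-invariant and $D\Phi^t(X)=X$, this forces $\Phi^{t*}\alpha=\alpha$, i.e.\ your cocycle is $f_t\equiv 1$. The Lagrangian property of $E^{ss}$ is then a one-liner: for $v,v'\in E^{ss}$, $\omega(v,v')=\omega(D\Phi^t v,D\Phi^t v')\to 0$ as $t\to+\infty$. Your route to the Lagrangian property, by contrast, is not complete as written. You correctly deduce that $f_t(x)\to 0$ pointwise on the open invariant set $U=\{x:\omega|_{E^{ss}_x}\neq 0\}$, and then appeal to the volume identity $\int_M f_t^{\,n+1}\mu=\int_M\mu$ together with transitivity. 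But the decay rate of $f_t$ on $U$ is not uniform (the constant involves $1/|\omega_x(v,v')|$, which can blow up near $\partial U$), transitivity only yields that $U$ is dense, and a dense open set need not have full $\mu$-measure; so you cannot pass to the limit under the integral without further argument. The clean fix is simply to normalize $\alpha$ as the paper does---then the detour through volumes and transitivity disappears entirely, and your conformal factor $|f_t|/c_t$ reduces to the paper's $a^{-1}$.
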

\begin{proof}
Let $TM=T\Phi \oplus E^{ss} \oplus E^{uu}$
 be the Anosov splitting of $\Phi$
 and $X$ be the vector field generating $\Phi$.
Take a continuous metric $g_-$ such that $\Phi$ is $s$-conformal
 with respect to $g_-$.
Let $\alpha$ be one-form $\alpha$ on $M$ such that
 $\mbox{Ker}\, \alpha=E^{ss} \oplus E^{uu}$ and $\alpha(X)=1$.
Since $E^{ss} \oplus E^{uu}$ is a $\Phi$-invariant $C^1$-contact structure,
 the one-form $\alpha$
 is a $C^1$-contact form invariant under $\Phi$.
Hence, $\omega=d\alpha$ is a $D\Phi$-invariant
 two-form which is non-degenerate on $E^{ss} \oplus E^{uu}$.

By the invariance,
 $\omega(v,v')=\omega(D\Phi^t(v),D\Phi^t(v'))$ 
 for $x \in M$, $v,v' \in E^{ss}(x)$, and $t \in \RR$,
 and the latter converges to zero as $t \ra +\infty$.
Hence, the restriction of $\omega$ to $E^{ss}$ is zero.
The restriction of $\omega$ to $E^{uu}$ also is.

We define a metric $g_+$ on $E^{uu}$ by
\begin{equation*}
 g_+(v,v')=\sum_{j=1}^n\omega(u_i,v)\omega(u_i,v')
\end{equation*}
 for $v,v' \in E^{uu}(x)$,
 where $(u_1,\dots,u_n)$ is an orthonormal basis
 of $E^{ss}(x)$ with respect to $g_-.$
By a direct calculation,
 we can check that $g_+$ does not depend on the choice
 of the orthonormal basis $(u_1,\dots,u_n)$.
Remark that $g_+$ is a continuous metric on $E^{uu}$.

Fix $x \in M$ and $t \in \RR$.
Since $\Phi$ is $s$-conformal,
 there exists a real number $a \neq 0$
 and orthonormal basis $(u_1,\dots,u_n)$ of $E^{ss}(x)$
 and $(u'_1,\dots,u'_n)$ of $E^{ss}(\Phi^t(x))$
 such that $D\Phi^t(u_i) =a \cdot u'_i$ for any $i=1,\cdots,n-1$.
Then, 
\begin{equation*}
 \omega(u'_i,D\Phi^t(v))
 =a^{-1}\omega(D\Phi^t(u_i),D\Phi^t(v))=a^{-1}\omega(u_i,v)
\end{equation*}
 for any $v \in E^{uu}(x)$.
It implies that
 $g_+(D\Phi^t(v),D\Phi^t(v'))=a^{-2} g_+(v,v')$
 for any $v,v' \in E^{uu}(x)$.
Therefore, $D\Phi$ is $u$-conformal with respect to $g_+$.
\end{proof}

%
%  Proof
%

\section{Proof of  Theorem \ref{thm:main thm}}
Let $\SO=SO(n)AN$ be the Iwasawa decomposition
 and $P=SO(n-1)AN$ the Borel subgroup of $\SO$
 described in Section \ref{sec:SO}.
Fix a torsion-free cocompact lattice $\Gamma$ of $\SO$,
 and put $M_\Gamma =\Gamma \bs \SO$ and
 $N_\Gamma =\Gamma \bs \SO /SO(n-1)$.
We denote the natural projection
 from $M_\Gamma$ to $N_\Gamma$ by $\pi$.
Let $\rho_0$ be the standard $P$-action on $M_\Gamma$.
For $x=\Gamma g \in M_\Gamma$ and $m \in SO(n-1)$,
 we put $x \cdot m=\rho_0(x,m)=\Gamma(gm)$.

For any $\rho \in \cA(M_\Gamma,P)$ and $g \in P$,
 we define a $C^\infty$ diffeomorphism $\rho^g$ of $M_\Gamma$
 by $\rho^g(x)=\rho(x,g)$.

\subsection{Induced Anosov flows on $N_\Gamma$}
Let $\cA_*(M_\Gamma,P)$ be the set of locally free $P$-actions on
 $M_\Gamma$ which satisfy $\rho(x,m)=x \cdot m$
 for any $x \in M_\Gamma$ and $m \in SO(n-1)$.
\begin{prop}
\label{prop:Palais}
If $\rho:M_\Gamma \times P \ra M_\Gamma$ is
 sufficiently $C^\infty$-close to $\rho_\Gamma$
 then $\rho$ is $C^\infty$ conjugate to an action in $\cA_*(M_\Gamma,P)$
 which is $C^\infty$-close to $\rho_\Gamma$.
\end{prop}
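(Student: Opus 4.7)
The plan is to reduce the proposition to Palais' stability theorem for smooth actions of compact Lie groups on compact manifolds. The key observation is that the only condition distinguishing $\cA_*(M_\Gamma,P)$ from the set of locally free $P$-actions concerns the compact subgroup $SO(n-1)\subset P$, so only the $SO(n-1)$-restriction of $\rho$ needs to be normalized.

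First, I would restrict $\rho$ to the compact subgroup $SO(n-1)$, obtaining a $C^\infty$ action of $SO(n-1)$ on the closed manifold $M_\Gamma$ which is $C^\infty$-close to $\rho_0|_{SO(n-1)}$. Palais' stability theorem asserts that a $C^\infty$ action of a compact Lie group on a closed manifold is locally rigid in the $C^\infty$ sense, with conjugating diffeomorphism close to the identity. Applying it produces a $C^\infty$ diffeomorphism $h$ of $M_\Gamma$, $C^\infty$-close to the identity, such that
\[
h\bigl(\rho(h^{-1}(x),m)\bigr)=x\cdot m \quad \text{for every } m\in SO(n-1) \text{ and } x\in M_\Gamma.
\]

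Next, I would set $\rho'(x,g)=h\bigl(\rho(h^{-1}(x),g)\bigr)$. By construction $\rho'$ is $C^\infty$-conjugate to $\rho$ via $h$ with the trivial automorphism of $P$, it coincides with $\rho_0$ on $SO(n-1)$, and it is $C^\infty$-close to $\rho_0$ because $h$ and $\rho$ are each close to the identity and to $\rho_0$ respectively. Local freeness of $\rho'$ would then follow by a standard openness argument: local freeness of a $C^\infty$ action is equivalent to pointwise injectivity of the infinitesimal map $\mathfrak{p}\to T_xM_\Gamma$, and this is an open condition in the $C^1$-topology by compactness of $M_\Gamma$. Since $\rho_0$ is locally free, so is every action sufficiently close to it, and in particular $\rho'\in\cA_*(M_\Gamma,P)$.

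The only non-routine input is Palais' theorem. Its proof crucially uses averaging over the compact group with respect to Haar measure, so compactness of $SO(n-1)$ is what makes the argument go; no such statement is available for the noncompact group $P$ itself, which is precisely why the proposition only claims to normalize the $SO(n-1)$-factor and not the full $P$-action. I expect no serious obstacle beyond carefully invoking Palais' theorem in the $C^\infty$ category so that the conjugating diffeomorphism is quantitatively close to the identity in all derivatives, which is needed in order to guarantee that the conjugated action is still $C^\infty$-close to $\rho_0$.
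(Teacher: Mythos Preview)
Your proposal is correct and is exactly the argument the paper has in mind: the paper's proof consists of a single sentence, ``It is an immediate corollary of Palais' stability theorem of compact group action,'' and you have simply unpacked what that corollary means. The only content is normalizing the $SO(n-1)$-restriction via Palais, conjugating the whole $P$-action by the resulting diffeomorphism, and noting that local freeness is $C^1$-open; you have all of this.
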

\begin{proof}
It is an immediate corollary of Palais' stability theorem of
 compact group action (\cite{Pa61}).
\end{proof}

For $\rho \in \cA_*(M_\Gamma,P)$,
 we define a flow $\Phi_\rho$ on $N_\Gamma$
 by $\Phi_\rho^t(\pi(x)) =\pi(\rho^{\exp(tX)}(x))$.
It is well-defined since
 $\exp(tX)$ commutes with any element of $SO(n-1)$.
We call the flow $\Phi_\rho$ {\it the flow} induced by $\rho$.

For $\rho \in \cA_*(M_\Gamma)$,
 we define vector fields $Y_1^\rho, \dots, Y_{n-1}^\rho$ on $M_\Gamma$
 by $Y_i^\rho(x)=(d/dt)\rho^{\exp tY_i}(x)|_{t=0}$.
\begin{lemma}
\label{lemma:reduce 1} 
For any $x \in M_\Gamma$ and $m \in SO(n-1)$,
\begin{equation*}
(D\pi(Y_1^\rho(x \cdot m)),\cdots, D\pi(Y_{n-1}^\rho(x \cdot m))
 =(D\pi(Y_1^\rho(x)),\cdots, D\pi(Y_{n-1}^\rho(x))) \cdot m.
\end{equation*}
\end{lemma}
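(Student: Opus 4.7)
The plan is to use the group-action identity $\rho(\rho(x,m),g) = \rho(x,mg)$ to push the right translation by $m$ through the exponential of $tY_i$, then convert the result back into a right translation using the adjoint formula (\ref{eqn:Y 2}), and finally apply $D\pi$ which is $SO(n-1)$-invariant.

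Concretely, first I would write
\begin{equation*}
m \exp(tY_i) = \exp(t\,\mbox{Ad}_m Y_i) \cdot m
\end{equation*}
so that, since $\rho$ is an action and $\rho(\cdot,m)$ is just right translation by $m$ on $M_\Gamma$,
\begin{equation*}
\rho^{\exp tY_i}(x \cdot m) = \rho(x, m\exp tY_i) = \rho\bigl(x,\exp(t\,\mbox{Ad}_m Y_i)\bigr)\cdot m.
\end{equation*}
Differentiating at $t=0$ gives
\begin{equation*}
Y_i^\rho(x\cdot m) = D(\cdot m)\bigl((\mbox{Ad}_m Y_i)^\rho(x)\bigr),
\end{equation*}
where $(\mbox{Ad}_m Y_i)^\rho$ denotes the infinitesimal generator of $\rho$ in the direction $\mbox{Ad}_m Y_i \in \so$.

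Next I would invoke the formula (\ref{eqn:Y 2}), which expresses $\mbox{Ad}_m Y_i$ as a linear combination of $Y_1,\dots,Y_{n-1}$ with coefficients given by the entries of $m$. Since $(\cdot)^\rho$ is $\RR$-linear in the Lie algebra variable, this yields
\begin{equation*}
\bigl(Y_1^\rho(x\cdot m),\dots,Y_{n-1}^\rho(x\cdot m)\bigr)
 = D(\cdot m)\bigl(Y_1^\rho(x),\dots,Y_{n-1}^\rho(x)\bigr) \cdot m.
\end{equation*}

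Finally, because $\pi: M_\Gamma \to N_\Gamma$ is constant on $SO(n-1)$-orbits we have $\pi \circ (\cdot m) = \pi$, so $D\pi \circ D(\cdot m) = D\pi$. Applying $D\pi$ to both sides of the previous identity cancels the $D(\cdot m)$ factor and produces exactly the claimed equation. The argument is a routine unwinding of definitions; the only point to be careful about is ordering the adjoint formula correctly so that the matrix $m$ appears on the right in the statement, which is precisely the form provided by (\ref{eqn:Y 2}).
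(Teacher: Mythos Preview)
Your proof is correct and follows essentially the same route as the paper: both use the group-action identity to rewrite $\rho(x\cdot m,\exp(tY_i))$ as $\rho(x,\exp(t\,\mbox{Ad}_m Y_i))\cdot m$, then invoke equation~(\ref{eqn:Y 2}) and the $SO(n-1)$-invariance of $\pi$. The only cosmetic difference is that the paper applies $\pi$ before differentiating in $t$, whereas you differentiate first and then apply $D\pi$; these are equivalent.
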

\begin{proof}
For any $x \in M_\Gamma$, $t \in \RR$, and $m \in SO(n-1)$,
\begin{align*}
\pi \circ \rho(x \cdot m,\exp(t Y_i)) 
 & = \pi(\rho(x, [m \exp(t Y_i)m^{-1}]) \cdot m)\\
 & =\pi \circ \rho(x,\exp (t \cdot \mbox{Ad}_m(Y_i))).
\end{align*}
Hence, the equation (\ref{eqn:Y 2}) implies the lemma.
\end{proof}
By the above lemma,
 we can define a $C^\infty$ subbundle $E^-_\rho$ of $TN_\Gamma$ by
\begin{equation*}
E^-_\rho(\pi(x))=D\pi(\langle Y_1^\rho(x), \dots Y_{n-1}^\rho(x) \rangle).
\end{equation*}
There exists a $C^\infty$ metric $g_\rho$ on $E^-_\rho$ such that
 $(D\pi(Y_1^\rho(x)), \dots, D\pi(Y_{n-1}^\rho(x)))$
 is an orthonormal basis of $E^-_\rho(x)$ with respect to $g_\rho$.
The subbundle $E^-_\rho$ is $D\Phi_\rho$-invariant and
\begin{equation}
\label{eqn:stable bundle}
 \|D\Phi_\rho^t(v)\|_{g_\rho}=e^{-t}\|v\|_{g_\rho} 
\end{equation}
 for any $t \in \RR$ and $v \in E^-_\rho$.

For $i=1,\cdots,n-1$,
 let $Y'_i$ be a vector field on $M_\Gamma$ given by
 $Y_i^+(x)=(d/dt)x\exp(tY'_i)|_{t=0}$.
Similar to the above, 
 we can define a $C^\infty$ subbundle $E^+_{\rho_0}$ of $TN_\Gamma$
 and its $C^\infty$ metric $g^+$ such that
\begin{equation*}
E^+_{\rho_0}(\pi(x))=D\pi(\langle Y_1^+(x), \dots Y_{n-1}^+(x) \rangle)
\end{equation*}
 and $(D\pi(Y_1^+(x)), \dots, D\pi(Y_{n-1}^+(x)))$
 is an orthonormal basis of $E^+_{\rho_0}(x)$ with respect to $g^+$.
The subbundle $E^+_{\rho_0}$ is $D\Phi_{\rho_0}$-invariant and
\begin{equation}
\label{eqn:unstable bundle}
\|D\Phi_{\rho_0}^t(v'))\|_{g^+}=e^t\|v'\|_{g^+} 
\end{equation}
 for any $t \in \RR$ and $v' \in E^+_{\rho_0}$.
%Let $g_\Gamma$ be a $C^\infty$ metric on $TN_\Gamma$
% such that the restriction of $g_\Gamma$ to
% $E^-_{\rho_0}$ and $E^+_{\rho_0}$ are $g_{\rho_0}$
% and $g'$, respectively.
The flow $\Phi_{\rho_0}$ is an Anosov flow
 with the Anosov splitting
 $TN_\Gamma=T\Phi \oplus E^-_{\rho_0} \oplus E^+_{\rho_0}$
 and it is $s$- and $u$-conformal
 with respect to $g_{\rho_0}$ and $g^+$, respectively.
It is known that $E^-_{\rho_0} \oplus E^+_{\rho_0}$ is a
 $\Phi_{\rho_0}$-invariant contact structure.

Since the set of Anosov flows is open in the space of $C^1$ flows,
 the induced flow  $\Phi_\rho$ is Anosov
 if $\rho \in \cA_*(M_\Gamma,P)$ is sufficiently $C^1$-close to $\rho_0$.
In this case, $E^-_\rho$ is the strong stable subbundle of $\Phi_\rho$
 and the Anosov flow $\Phi_\rho$
 is $s$-conformal with respect to $g_\rho$.

\subsection{Reduction to the conjugacy of induced flows}

We reduce Theorem \ref{thm:main thm} to
 the smooth conjugacy problem of the induced flows.
\begin{thm}
\label{thm:reduce} 
Let $\rho$ be a locally free action in $\cA_*(M_\Gamma,P)$.
Suppose that a $C^\infty$ diffeomorphism $h$ of $N_\Gamma$ satisfies
$\Phi_\rho^t \circ h=h \circ \Phi_{\rho_0}^t$ for any $t \in \RR$.
Then, $\rho$ is $C^\infty$-conjugate to the standard
 $P$-action $\rho_0$.
\end{thm}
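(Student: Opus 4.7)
The plan is to lift the flow conjugacy $h$ to a smooth, $SO(n-1)$-equivariant diffeomorphism $H$ of $M_\Gamma$ carrying the frame $(Y_i^{\rho_0})$ to $(Y_i^\rho)$. By Lemma~\ref{lemma:reduce 1}, the map $x\mapsto(D\pi(Y_1^{\rho_0}(x)),\dots,D\pi(Y_{n-1}^{\rho_0}(x)))$ identifies $M_\Gamma$ with the oriented orthonormal frame bundle $\Fr^+(E^-_{\rho_0},g_{\rho_0})$, and the analogous $\rho$-frames give an identification $M_\Gamma\cong\Fr^+(E^-_\rho,g_\rho)$. Since $h$ intertwines the Anosov flows, $Dh$ sends $E^-_{\rho_0}$ isomorphically onto $E^-_\rho$; the construction of $H$ reduces to normalising $h$ so that this restriction is a pointwise isometry, and then transporting frames.

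The central step is showing $Dh|_{E^-_{\rho_0}}$ is conformal by a constant factor. The pullback $Dh^*g_\rho$ is another $s$-conformal metric for $\Phi_{\rho_0}$ with contraction rate $e^{-t}$; writing $Dh^*g_\rho=g_{\rho_0}(S\cdot,\cdot)$ for a $g_{\rho_0}$-self-adjoint positive operator field $S$, the conformality forces $S(\Phi_{\rho_0}^t z)=R_t^0\,S(z)\,(R_t^0)^{-1}$, where $R_t^0:=e^t D\Phi_{\rho_0}^t|_{E^-}$ is an isometry of $g_{\rho_0}$. Topological transitivity of $\Phi_{\rho_0}$ makes the eigenvalues of $S$ constant, and ergodicity of the frame flow $\rho_0^{\exp tX}$ on $M_\Gamma$ (a consequence of the Howe--Moore theorem for $\Gamma\bs\SO$) forces these eigenvalues to coincide, so $S=C^2\cdot\mathrm{id}$. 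Replacing $h$ by $\Phi_\rho^{\log C}\circ h$ preserves the flow conjugation and yields $Dh^*g_\rho=g_{\rho_0}$.

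With $Dh$ an isometry, define $H(x)$ to be the unique point of $\pi^{-1}(h(\pi(x)))$ satisfying $D\pi(Y_i^\rho(H(x)))=Dh(D\pi(Y_i^{\rho_0}(x)))$ for every $i$. This is smooth from the frame-bundle structure, and $SO(n-1)$-equivariance follows at once from Lemma~\ref{lemma:reduce 1} applied to both $\rho_0$ and $\rho$. The frame identity $DH(Y_i^{\rho_0})=Y_i^\rho$ is precisely the statement that $H$ intertwines the $N$-parts of the two actions, which are the flows of these vector fields. For the $A$-part, the covariance $D\rho^{\exp tX}(Y_i^\rho)=e^{-t}Y_i^\rho$ together with $h\circ\Phi_{\rho_0}^t=\Phi_\rho^t\circ h$ forces $H\circ\rho_0^{\exp tX}=\rho^{\exp tX}\circ H$, by uniqueness of the point in a fibre with prescribed frame.

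The main obstacle is the scalar assertion for $S$. Constancy of its eigenvalues comes cheaply from the Anosov structure and topological transitivity, but proving them all equal requires the homogeneous geometry of $\Gamma\bs\SO$ --- specifically, ergodicity of the frame flow on $M_\Gamma$, or equivalently density of the $E^-_{\rho_0}$-holonomy in $SO(n-1)$. The remaining bookkeeping --- smoothness, equivariance, and the $A$- and $N$-intertwining --- is then straightforward from the frame-bundle identification together with the $s$-conformal structure of the two induced flows.
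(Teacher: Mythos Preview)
Your outline follows the paper's route closely: identify $M_\Gamma$ with the oriented orthonormal frame bundle of $(E^-,g)$ via $x\mapsto(D\pi(Y_i(x)))_i$, prove that $Dh|_{E^-_{\rho_0}}$ is conformal by a global constant (the paper's Lemma~\ref{lemma:reduce 2}), and transport frames to build the lift $H$. Your operator-field argument for constant conformality is equivalent to the paper's sphere-bundle transitivity argument (both rest on Moore/Howe--Moore for the $A$-flow on $\Gamma\bs\SO$). Your renormalisation $h\mapsto\Phi_\rho^{\log C}\circ h$ is a genuine simplification over the paper: with $C=1$ the paper's matrix $b$ becomes the identity and the final automorphism $\theta$ is trivial, so that bookkeeping disappears.

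There is, however, one real gap. You assert ``the frame identity $DH(Y_i^{\rho_0})=Y_i^\rho$'' as if it were the defining property of $H$, but your definition only pins down the $D\pi$-projections: from $\pi\circ H=h\circ\pi$ you get
\[
D\pi\bigl(DH(Y_i^{\rho_0})\bigr)=Dh\bigl(D\pi(Y_i^{\rho_0})\bigr)=D\pi\bigl(Y_i^\rho\circ H\bigr),
\]
so $DH(Y_i^{\rho_0})-Y_i^\rho\circ H$ lies in the vertical bundle $\ker D\pi$, and nothing yet forces it to vanish. The fix is to reorder: establish the $A$-intertwining $H\circ\rho_0^{\exp tX}=\rho^{\exp tX}\circ H$ first (your uniqueness-of-frame argument for this is correct and does not use the $N$-claim). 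Then $DH$ carries the strong stable bundle $\langle Y_1^{\rho_0},\dots,Y_{n-1}^{\rho_0}\rangle$ of the flow $\rho_0^{\exp tX}$ on $M_\Gamma$ into the set of vectors contracted by $\rho^{\exp tX}$, which contains $\langle Y_1^\rho,\dots,Y_{n-1}^\rho\rangle$ and hence equals it by a dimension count --- this is exactly the step the paper carries out in its second proposition. Once $DH(Y_i^{\rho_0})$ is known to lie in that span (transverse to $\ker D\pi$), the projected identity upgrades to the full one and the $N$-intertwining follows.
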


Let $\Fr E^-_\rho$ be the frame bundle of $E^-_\rho$.
It admits a natural right action of $GL(n-1,\RR)$.
The flow $\Phi_\rho$ induce a flow $\Fr\Phi_\rho$ on $\Fr E^-_\rho$.
Let $OE_\rho^-$ be the orthonormal frame bundle of $(E^-_\rho,g_\rho)$.
We define a map $\psi_\rho:M_\Gamma \ra OE^-_\rho$ by
\begin{equation*}
 \psi_\rho(x)=(D\pi(Y_1^\rho(x)), \dots, D\pi(Y_{n-1}(x))).
\end{equation*}
By Lemma \ref{lemma:reduce 1},
 $OE_\rho^-(y)=\{\psi_\rho(x) \st x \in \pi^{-1}(y)\}$
 for any $y \in N_\Gamma$
 and $\psi_\rho$ is a diffeomorphism from $M_\Gamma$ to $OE_\rho^-$.
By Equation (\ref{eqn:Y 1}), we have
 $\Fr\Phi_\rho^t(\psi(x)) =e^{-t} \psi_\rho(\rho^{\exp(tX)}(x))$.
Hence, we can define a flow $O\Phi_\rho$ on $OE_\rho^-$ by
\begin{equation*}
O\Phi_\rho^t(\psi_\rho(x))=e^t \cdot
 \Fr\Phi_\rho^t(\psi_\rho(x)) =\psi_\rho(\rho^{\exp(tX)}(x)).
\end{equation*}
In particular, the map $\psi_\rho$ is a $C^\infty$ conjugacy between
 $\rho^{\exp(tX)}$ and $O\Phi_\rho^t$.
By Moore's ergodicity theorem,
 the flow $(\rho^{\exp(tX)})_{t \in \RR}$ is topologically transitive.
Hence, so the flow $O\Phi_\rho$ is.

Fix $\rho \in \cA_*(M_\Gamma,P)$
 and suppose that there exists a $C^\infty$ diffeomorphism $h$ of $N_\Gamma$
 such that 
\begin{equation}
\label{eqn:reduce assume}
 \Phi_\rho^t \circ h=h \circ \Phi_{\rho_0}^t 
\end{equation}
 for any $t \in \RR$.
\begin{lemma}
\label{lemma:reduce 2} 
$Dh(E^-_{\rho_0})=E^-_\rho$ and there exists a constant $c_h>0$
 such that
 $\|Dh(v)\|_{g_\rho}=c_h \cdot \|v\|_{g_\Gamma}$
 for any $v \in E^-_{\rho_0}$.
\end{lemma}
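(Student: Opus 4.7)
The plan is to handle the two conclusions separately: first that $Dh$ maps $E^-_{\rho_0}$ onto $E^-_\rho$, and second that the resulting linear map is a constant-factor scaling on each fiber.

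For the first claim, I would use the standard fact that a smooth conjugacy of Anosov flows carries the strong stable bundle to the strong stable bundle. Differentiating $\Phi_\rho^t\circ h=h\circ\Phi_{\rho_0}^t$ and applying (\ref{eqn:stable bundle}), for any $v\in E^-_{\rho_0}$ one has
\[
\|D\Phi_\rho^t(Dh(v))\|_{g_\rho}=\|Dh(D\Phi_{\rho_0}^t v)\|_{g_\rho}\le \|Dh\|_{C^0}\cdot e^{-t}\|v\|_{g_{\rho_0}}\to 0
\]
as $t\to+\infty$. Hence $Dh(v)$ lies in the strong stable bundle of the Anosov flow $\Phi_\rho$, which the preceding discussion in the text identifies with $E^-_\rho$. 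A dimension count then gives $Dh(E^-_{\rho_0})=E^-_\rho$.

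For the second claim I would consider the continuous positive function
\[
f(v)=\|Dh(v)\|_{g_\rho}
\]
on the unit sphere bundle $\mathbb{S}(E^-_{\rho_0})$ (measured by $g_{\rho_0}$) and show it is invariant under the flow induced by $\Phi_{\rho_0}$. The key cancellation, which comes from both $\Phi_{\rho_0}$ and $\Phi_\rho$ scaling strong-stable norms by the \emph{same} factor $e^{-t}$ via (\ref{eqn:stable bundle}), is
\[
f\!\left(\frac{D\Phi_{\rho_0}^t v}{\|D\Phi_{\rho_0}^t v\|_{g_{\rho_0}}}\right)=e^t\|D\Phi_\rho^t(Dh(v))\|_{g_\rho}=e^t\cdot e^{-t}\|Dh(v)\|_{g_\rho}=f(v).
\]

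The remaining step, which I expect to be the main technical point, is showing that this induced flow on $\mathbb{S}(E^-_{\rho_0})$ is topologically transitive; once it is, the continuous invariant $f$ must be a constant $c_h>0$, and linearity of $Dh$ on fibers of $E^-_{\rho_0}$ extends the identity $f\equiv c_h$ to $\|Dh(v)\|_{g_\rho}=c_h\|v\|_{g_{\rho_0}}$ for all $v\in E^-_{\rho_0}$. To set this up I would use the diffeomorphism $\psi_{\rho_0}:M_\Gamma\to OE^-_{\rho_0}$ together with the identification $S^{n-2}=SO(n-1)/SO(n-2)$ (valid because $n\geq 3$) to realise $\mathbb{S}(E^-_{\rho_0})$ as the homogeneous space $\Gamma\bs\SO/SO(n-2)$; the commutation relations (\ref{eqn:Y 1}) then show that the induced flow becomes right translation by $\exp(tX)$. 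Moore's ergodicity theorem gives that this $A$-action on $\Gamma\bs\SO$ is ergodic, hence topologically transitive on the compact space, and topological transitivity descends through the quotient by the compact group $SO(n-2)$, which commutes with $A$.
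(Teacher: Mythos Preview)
Your argument is correct and follows essentially the same route as the paper: the first claim via the characterization of the strong stable bundle plus a dimension count, and the second via invariance of $v\mapsto\|Dh(v)\|_{g_\rho}$ under the renormalized flow on the unit sphere bundle together with its topological transitivity. The only cosmetic difference is that the paper obtains transitivity of the sphere-bundle flow by projecting the already-established transitivity of $O\Phi_{\rho_0}$ on $OE^-_{\rho_0}\cong M_\Gamma$ through the first-vector map $\pi_O$, which is exactly your quotient by $SO(n-2)$ phrased in bundle rather than homogeneous-space language.
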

\begin{proof}
Recall that the flow $\Phi_{\rho_0}$ is Anosov and
 $E^-_{\rho_0}$ is its strong stable subbundle.
Since $h$ is a $C^\infty$ conjugacy between $\Phi_{\rho_0}$ and $\Phi_\rho$,
 the flow $\Phi_\rho$ is also Anosov
 and its strong stable subbundle is $Dh(E^-_{\rho_0})$. 
By Equation (\ref{eqn:stable bundle}),
 the subbundle $E^-_\rho$ is contained
 in the strong stable subbundle $Dh(E^-_{\rho_0})$.
Since their dimensions are equal,
 we have $Dh(E^-_{\rho_0})=E^-_\rho$.

Let $SE^-_{\rho_0}$ be the unit sphere bundle
 $\{v \in E^-_{\rho_0} \st \|v\|_{g_{\rho_0}}=1\}$ of $E^-_{\rho_0}$
 and $\pi_O:OE^-_{\rho_0} \ra SE^-_{\rho_0}$
 be the projection defined by  $(v_1,\dots,v_{n-1}) \mapsto v_1$.
By Equation (\ref{eqn:stable bundle}) for $\rho_0$,
 we can define a flow $S\Phi_{\rho_0}$ on $SE^-_{\rho_0}$
 by $S\Phi_{\rho_0}^t=e^t D\Phi_{\rho_0}^t$.
Then, $\pi_O \circ O\Phi_{\rho_0}^t=S\Phi_{\rho_0}^t \circ \pi_O$.
Since $O\Phi_{\rho_0}$ is topologically transitive,
 $S\Phi_{\rho_0}$ also is.
Take $v_0 \in SE^-_{\rho_0}$
 such that the orbit $\{S\Phi_{\rho_0}^t(v_0) \st t \in \RR\}$
 is dense in $SE^-_{\rho_0}$.
Put $c_h=\|Dh(v_0)\|_{g_\rho}$.
By Equation (\ref{eqn:stable bundle}),
\begin{align*}
\|Dh \circ S\Phi_{\rho_0}^t(v_0)\|_{g_\rho}
 & = e^t \|Dh \circ D\Phi_{\rho_0}^t(v_0)\|_{g_\rho}\\
 & = e^t \|D\Phi_\rho^t \circ Dh(v_0)\|_{g_\rho}\\
 & = \|Dh(v_0)\|_{g_\rho}\\
 & = c_h
\end{align*}
 for any $t \in \RR$.
It implies that
 $\|Dh(v)\|_{g_\rho}=c_h$ for any $v \in SE^-_{\rho_0}$.
\end{proof}

\begin{prop}
There exists a $C^\infty$ diffeomorphism $H$ of $M_\Gamma$
 such that $H \circ \rho_0^g=\rho^g \circ H$
 for any $g \in \{\exp(tX)m \st t \in \RR, m \in SO(n-1)\}$.
\end{prop}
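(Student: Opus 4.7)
The plan is to build $H$ as a composition of the identifications $\psi_{\rho_0}$ and $\psi_\rho$ with the orthonormal frame bundles, together with a natural bundle lift of $h$.

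\textbf{Lifting $h$ to frame bundles.} By Lemma \ref{lemma:reduce 2}, $Dh$ restricts to a fiberwise linear isomorphism $E^-_{\rho_0}(y) \to E^-_\rho(h(y))$ that scales $g_{\rho_0}$ to $g_\rho$ uniformly by the constant $c_h$. Consequently, the rescaled map $(v_1,\dots,v_{n-1}) \mapsto c_h^{-1}(Dh(v_1),\dots,Dh(v_{n-1}))$ carries $OE^-_{\rho_0}$ onto $OE^-_\rho$ and defines a $C^\infty$ bundle diffeomorphism $\tilde h$ covering $h$. By construction, $\tilde h$ commutes with the right $GL(n-1,\RR)$-action on frames, and in particular with the right $SO(n-1)$-action. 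Moreover, functoriality of the frame-bundle construction together with the conjugacy $Dh \circ D\Phi_{\rho_0}^t = D\Phi_\rho^t \circ Dh$ yields $\tilde h \circ O\Phi_{\rho_0}^t = O\Phi_\rho^t \circ \tilde h$ for every $t$, since the additional $e^t$ in $O\Phi^t = e^t \Fr\Phi^t$ and the normalization $c_h^{-1}$ are constant scalars that commute with everything in sight.

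\textbf{Construction and verification of $H$.} Set $H = \psi_\rho^{-1} \circ \tilde h \circ \psi_{\rho_0}$. This is a $C^\infty$ diffeomorphism of $M_\Gamma$ since both $\psi_\rho$ and $\psi_{\rho_0}$ are. For $g = \exp(tX)$, the identity $\psi_\rho \circ \rho^{\exp(tX)} = O\Phi_\rho^t \circ \psi_\rho$ (and its $\rho_0$-analogue) established in the text, combined with the intertwining property of $\tilde h$ from the previous step, immediately gives $H \circ \rho_0^{\exp(tX)} = \rho^{\exp(tX)} \circ H$. For $g = m \in SO(n-1)$, Lemma \ref{lemma:reduce 1} yields $\psi_\rho(x \cdot m) = \psi_\rho(x) \cdot m$, and $\tilde h$ commutes with the right $SO(n-1)$-action on frames, whence $H \circ \rho_0^m = \rho^m \circ H$. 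Finally, for a right action $\rho^{\exp(tX)m} = \rho^m \circ \rho^{\exp(tX)}$, so the two cases combine to cover all $g \in \{\exp(tX)m \st t \in \RR, m \in SO(n-1)\}$.

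\textbf{Where the work actually is.} There is no serious obstacle here: Lemmas \ref{lemma:reduce 1} and \ref{lemma:reduce 2} have already carried out the nontrivial work of promoting $h$ from a mere flow conjugacy on $N_\Gamma$ to something admitting a smooth lift to orthonormal frame bundles. The conceptually crucial input, already extracted in Lemma \ref{lemma:reduce 2}, is that $h$ acts conformally along $E^-_{\rho_0}$ with a \emph{single global} constant $c_h$, not merely fiberwise, which is what makes the frame-bundle lift well defined. Note that the proposition handles only the $MA$-directions of $P$; extending the conjugacy to the $N$-directions $\exp(tY_i)$ will require a separate argument, presumably of Livschitz type invoking Theorem \ref{thm:Livschitz}.
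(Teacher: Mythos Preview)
Your proof is correct and essentially identical to the paper's: both define the lift $Oh = c_h^{-1}\Fr h$ (your $\tilde h$), verify its $SO(n-1)$-equivariance and intertwining with $O\Phi^t$, and set $H = \psi_\rho^{-1}\circ Oh \circ \psi_{\rho_0}$. Your closing speculation is slightly off, though: the paper extends $H$ to the $N$-directions not via Livschitz but by a transitivity argument showing $DH$ carries $(Y_i^{\rho_0})$ to $(Y_i^\rho)\cdot b^{-1}$ for a constant matrix $b$ commuting with $SO(n-1)$.
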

\begin{proof}
Let $\Fr h$ be the lift of $h$ to $\Fr E^-_{\rho_0}$.
By the above lemma, 
 we can define a diffeomorphism $Oh:OE^-_{\rho_0} \ra OE^-_\rho$ 
 by $Oh=c_h^{-1}\Fr h$.
Then, $O\Phi_\rho^t \circ Oh=Oh \circ O\Phi_{\rho_0}^t$
 for any $t \in \RR$.
Since $\Fr h$ commutes with the action of $SO(n-1)$,
 we have $Oh(z \cdot m)=Oh(z) \cdot m$
 for any $z \in OE^-_{\rho_0}$ and $m \in SO(n-1)$.
Put $H=\psi_\rho^{-1} \circ Oh \circ \psi_{\rho_0}$.
Then, we have 
\begin{gather*}
H \circ \rho_0^{\exp(tX)} = \rho^{\exp(tX)} \circ H,\\
H(x \cdot m)=H(x) \cdot m
\end{gather*}
 for any $t \in \RR$ and any $m \in SO(n-1)$.
\end{proof}

The proof of  Theorem \ref{thm:reduce} will finish
 once we show the following
\begin{prop}
There exists an automorphism $\theta$ of $P$
 such that $\rho(H(x),\theta(g))=\rho_0(x,g)$
 for any $x \in M_\Gamma$ and $g \in P$.
\end{prop}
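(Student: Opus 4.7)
The plan is to replace $\rho$ by the conjugated action $\tilde\rho(x,g):=H^{-1}(\rho(H(x),g))$. By the preceding proposition $\tilde\rho\in\cA_*(M_\Gamma,P)$ and $\tilde\rho^g=\rho_0^g$ for every $g\in SO(n-1)\cdot A$, so the statement reduces to finding an automorphism $\theta\in\mathrm{Aut}(P)$ that is the identity on $SO(n-1)\cdot A$ and satisfies $\tilde\rho^{\theta(g)}=\rho_0^g$ for all $g\in P$. Compatibility of $\theta$ with the conjugation action of $SO(n-1)\cdot A$ on $N$ forces $\theta|_N$ to come from an element of the centralizer of the standard representation of $SO(n-1)$ in $GL(n-1,\RR)$, acting on $N\simeq\RR^{n-1}$ via the basis $Y_1,\dots,Y_{n-1}$. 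So the task is to show that the vector fields $Z_i:=Y_i^{\tilde\rho}$ are a constant linear combination of the $Y_j^{\rho_0}$.

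Because $H$ conjugates the induced flows, $\Phi_{\tilde\rho}=\Phi_{\rho_0}$ and the two strong stable subbundles coincide: $E^-_{\tilde\rho}=E^-_{\rho_0}$. Hence $D\pi(Z_i)\in E^-_{\rho_0}$, and one obtains a unique pointwise decomposition
\begin{equation*}
 Z_i(x)=\sum_{j=1}^{n-1}A_{ji}(x)\,Y_j^{\rho_0}(x)+V_i(x),
\end{equation*}
with $V_i(x)$ tangent to the $SO(n-1)$-fiber through $x$ and $A\colon M_\Gamma\to\mathrm{Mat}(n-1,\RR)$ and the $V_i$ smooth; write $V_i(x)=\xi_i(x)^{\rho_0}(x)$ for the corresponding continuous map $\xi_i\colon M_\Gamma\to\mathfrak{so}(n-1)$.

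The central calculation exploits $\tilde\rho^{\exp tX}=\rho_0^{\exp tX}$. Applying $D\rho_0^{\exp tX}$ to the decomposition and using that $D\rho_0^{\exp tX}$ scales each $Y_j^{\rho_0}$ by $e^{-t}$ (from $[X,Y_i]=Y_i$, equivalently \eqref{eqn:stable bundle}) and fixes each $\mathfrak{so}(n-1)$-vector field (because $X$ centralizes $\mathfrak{so}(n-1)$), and noting that $Y_i^{\tilde\rho}$ transforms under this flow in exactly the same way as $Y_i^{\rho_0}$, one obtains after separating horizontal and vertical components the two functional equations
\begin{equation*}
 A_{ji}\circ\rho_0^{\exp tX}=A_{ji},\qquad
 \xi_i\circ\rho_0^{\exp tX}=e^{t}\xi_i.
\end{equation*}
The main obstacle is the vanishing of the vertical twist $V_i$, but it follows cleanly from the second equation: continuity of $\xi_i$ and compactness of $M_\Gamma$ bound $\xi_i$ uniformly, and the exponential growth law then forces $\xi_i\equiv0$. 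The first equation, together with topological transitivity of the $\rho_0^{\exp tX}$-flow on $M_\Gamma$ (noted in the excerpt via Moore's ergodicity theorem) and continuity of $A$, forces $A\equiv A_0$ to be a constant matrix.

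Lemma \ref{lemma:reduce 1} applied to both $\rho_0$ and $\tilde\rho$ gives the equivariance $A_0m=mA_0$ for every $m\in SO(n-1)$, so $A_0$ lies in the centralizer of $SO(n-1)$ in $\mathrm{Mat}(n-1,\RR)$; it is invertible because $\tilde\rho$ is locally free. Defining $\theta$ to be the identity on $SO(n-1)A$ and $\exp(\sum_is_iY_i)\mapsto\exp(\sum_{i,j}s_i(A_0^{-1})_{ji}Y_j)$ on $N$ yields an automorphism of $P$: the conjugation relations of $SO(n-1)$ and $A$ on $N$ (the standard representation and a scalar action, respectively) are preserved because $A_0^{-1}$ commutes with both. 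The identity $Z_i=\sum_j(A_0)_{ji}Y_j^{\rho_0}$ then translates directly into $\tilde\rho^{\theta(g)}=\rho_0^g$ for $g\in N$, which together with $\theta|_{SO(n-1)A}=\mathrm{id}$ completes the proof.
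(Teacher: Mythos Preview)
Your proof is correct and follows essentially the same route as the paper's: both arguments use the $A$-conjugacy to show that the transition matrix between the $N$-vector fields is constant (via topological transitivity of the $\exp(tX)$-flow) and then use the $SO(n-1)$-equivariance to force this matrix into the centralizer of $SO(n-1)$, from which the automorphism $\theta$ is built.

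The only stylistic difference worth noting is how the vertical (fiber) direction is handled. You project to $N_\Gamma$ to obtain $E^-_{\tilde\rho}=E^-_{\rho_0}$ and then decompose $Z_i$ on $M_\Gamma$ with an a priori nonzero vertical remainder $V_i$, which you kill by the growth law $\xi_i\circ\rho_0^{\exp tX}=e^{t}\xi_i$. The paper instead characterizes $\langle Y_1^{\rho_0}(x),\dots,Y_{n-1}^{\rho_0}(x)\rangle$ directly on $M_\Gamma$ as the full contracting subspace $\{v\in T_xM_\Gamma:\lim_{t\to+\infty}\|D\rho_0^{\exp(tX)}(v)\|=0\}$; since the $SO(n-1)$-fiber direction is neutral for this flow, it is automatically excluded, so no separate vertical-vanishing step is needed. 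Your growth argument is a perfectly valid substitute, just slightly more roundabout. Similarly, where you define $\theta$ piecewise on the factors of $P=SO(n-1)\cdot A\cdot N$ and verify the conjugation relations, the paper realizes $\theta$ as conjugation by the matrix $b$ (so that $\theta\in\mathrm{Aut}(P)$ is immediate once one checks $bPb^{-1}=P$); the two descriptions of $\theta$ coincide.
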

\begin{proof}
Since $\rho^{\exp (tX)} \circ H =H \circ \rho_0^{\exp (tX)}$
 for any $t \in \RR$, we have
\begin{align}
\label{eqn:reduce H 1}
D\rho^{\exp(tX)}(DH(Y_j^{\rho_0}(x)))
 &= DH(D\rho_0^{\exp(tX)}(Y_j^{\rho_0}(x))) \\
 &= e^{-t}DH(Y_j^{\rho_0}(\rho_0^{tX}(x))). \nonumber
\end{align}
We also have
\begin{align*}
DH(\langle Y_1^{\rho_0}(x),\cdots,Y_{n-1}^{\rho_0}(x) \rangle)
 & = DH(\{v \in T_{x} M_\Gamma \st
   \lim_{t \ra +\infty} \|D\rho_0^{\exp(tX)}(v)\|=0\})\\
 & = \{v' \in T_{H(x)} M_\Gamma \st
   \lim_{t \ra +\infty} \|D\rho^{\exp(tX)}(v')\|=0\}\\
 & \supset \langle Y_1^\rho(H(x)),\cdots,Y_{n-1}^\rho(H(x)) \rangle
\end{align*}
 for any $x \in M_\Gamma$.
In particular,
\begin{align*}
DH(\langle Y_1^{\rho_0}(x),\cdots,Y_{n-1}^{\rho_0}(x) \rangle)
 & = \langle Y_1^\rho(H(x)),\cdots,Y_{n-1}^\rho(H(x)) \rangle.
\end{align*}
Since the flow $(\rho_0^{\exp(tX)})_{t \in \RR}$ is topologically transitive,
 there exists $x_0 \in M_\Gamma$ such that
 $\{\rho_0^{tX}(x_0) \st t \in \RR\}$ is a dense subset of $M_\Gamma$.
Let $b=(b_{ij})_{i,j=1,\cdots,n-1}$ be the square matrix
 given by $Y_j^\rho(H(x_0))=\sum_{i=1}^{n-1} b_{ij} DH(Y_i^{\rho_0}(x_0))$.
Remark that it is an invertible matrix.
For any $t \in \RR$, we have
\begin{align*}
Y_j^\rho(H(\rho_0^{\exp(tX)}(x_0)))
 & = Y_j^\rho(\rho^{\exp(tX)}(H(x_0)))\\
 & = e^t D\rho^{\exp(tX)}(Y_j^\rho(H(x_0)))\\
 & =\sum_{i=1}^{n-1}b_{ij} DH(Y_i^{\rho_0}(\rho_0^{\exp(tX)}(x_0))).
\end{align*}
Since the orbit of $x_0$ is dense,
$Y_j^\rho(H(x))
  =\sum_{i=1}^{n-1} b_{ij}DH(Y_i^{\rho_0}(x))$
 for any $x \in M_\Gamma$.
In particular,
\begin{equation}
\label{eqn:reduce H 2}
DH(Y_1^{\rho_0},\dots,Y_{n-1}^{\rho_0})
 = (Y_1^\rho,\dots,Y_{n-1}^\rho) \cdot b^{-1}
\end{equation}

Recall that
\begin{align*}
D\rho_0^m(Y_1^{\rho_0},\dots,Y_{n-1}^{\rho_0})
 & = (Y_1^{\rho_0},\dots,Y_{n-1}^{\rho_0}) \cdot m,\\
D\rho^m(Y_1^\rho,\dots,Y_{n-1}^\rho)
 & = (Y_1^\rho,\dots,Y_{n-1}^\rho) \cdot m
\end{align*}
 for any $m \in SO(n-1)$.
Since $\rho^m \circ H= H \circ \rho_0^m$ for any $m$,
 Equation (\ref{eqn:reduce H 2}) implies
\begin{equation*}
(Y_1^{\rho_0},\dots, Y_{n-1}^{\rho_0}) \cdot mb^{-1}
 = (Y_1^{\rho_0},\dots, Y_{n-1}^{\rho_0}) \cdot b^{-1}m.
\end{equation*}
Hence, $b$ commutes with any $m \in SO(n-1)$.
It is easy to check that
\begin{itemize}
\item if $n \geq 4$, then there exists $\alpha \in \RR \setminus \{0\}$
 such that $b=\alpha I_{n-1}$, where $I_{n-1}$ is the unit matrix
 of size $(n-1)$,
\item if $n =3$, then there exists $\alpha \in \RR \setminus \{0\}$
 and $m_0 \in SO(2)$ such that such that $b=\alpha m_0$.
\end{itemize}
In each case, $\alpha^{-1}b$ is contained in the center of $SO(n-1)$.

We define a map $\theta:P \ra \SO$ by $\theta(g)=b g b^{-1}$.
Since $\alpha^{-1}b$ is contained in the center of $SO(n-1)$,
 we have $\theta(P)=P$.
In particular, $\theta$ is an automorphism of $P$
 such that
\begin{equation*}
 \theta_*(Y_1,\dots,Y_{n-1})=(Y_1,\dots,Y_{n-1}) \cdot b^{-1},
\end{equation*}
 where $\theta_*$ is the induced automorphism of the Lie algebra of $P$.
By Equation (\ref{eqn:reduce H 2}), 
 $\rho(H(x),\theta(\exp(Y_i))=H(\rho_0(x,\exp(Y_i)))$ 
 for any $x \in M_\Gamma$ and $i=1,\dots,n-1$.
On the other hand,
 $\theta(g')=g'$ and
 $\rho^{g'} \circ H=H \circ \rho_0^{g'}$
 for any $g' \in \{\exp(tX)m \st t \in \RR, m \in SO(n-1)\}$.
Therefore,
 $\rho(H(x),\theta(g)=H(\rho_0(x,g))$ 
 for any $x \in M_\Gamma$ and $g \in P$.
\end{proof}

\subsection{Smooth conjugacy between induced flows}

In this subsection, we show the following theorem.
With Proposition \ref{prop:Palais}
 and Theorem \ref{thm:reduce},
 it completes the proof of the main theorem.
\begin{thm}
\label{thm:conj flow}
If $\rho \in \cA_*(M_\Gamma,P)$ is
 sufficiently $C^\infty$-close to $\rho_0$,
 then there exists a $C^\infty$ diffeomorphism $h$ of $N_\Gamma$
 such that $\Phi_\rho^t \circ h=h \circ \Phi_{\rho_0}^t$ for any $t \in \RR$.
\end{thm}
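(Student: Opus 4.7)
The plan is to first build a H\"older topological conjugacy between $\Phi_\rho$ and $\Phi_{\rho_0}$ using structural stability of Anosov flows together with a Livschitz-type argument, and then to upgrade its regularity using Theorem \ref{thm:Llave}. The main technical obstacle will be establishing the $u$-conformality of $\Phi_\rho$, which is needed to invoke the second (global) conclusion of Theorem \ref{thm:Llave}.

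For the topological conjugacy: since $\Phi_\rho$ is $C^1$-close to the Anosov flow $\Phi_{\rho_0}$, structural stability provides a H\"older orbit equivalence $h_0$ of $N_\Gamma$ with a reparametrization cocycle $\sigma$ satisfying $h_0 \circ \Phi_{\rho_0}^t = \Phi_\rho^{\sigma(\cdot,t)} \circ h_0$. The crucial point is that Equation (\ref{eqn:stable bundle}), applied to both $\rho_0$ and $\rho$, forces all stable eigenvalues of $D\Phi_\rho^{T'}$ at a periodic point of period $T'$ to equal $e^{-T'}$ (with multiplicity $n-1$), and similarly for $\Phi_{\rho_0}$. Since these eigenvalues are invariants at corresponding periodic orbits under the orbit equivalence, the periods must match: $\sigma(x,T) = T$ whenever $\Phi_{\rho_0}^T(x) = x$. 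The H\"older Livschitz theorem then yields a H\"older function $\beta$ with $\sigma(x,t) - t = \beta(x) - \beta(\Phi_{\rho_0}^t(x))$, so that $h(x) := \Phi_\rho^{\beta(x)}(h_0(x))$ defines a H\"older conjugacy $\Phi_\rho^t \circ h = h \circ \Phi_{\rho_0}^t$ for all $t \in \RR$.

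Both flows are $s$-conformal with respect to the $C^\infty$ metrics $g_\rho$ and $g_{\rho_0}$, and their strong stable bundles have dimension $n - 1 \geq 2$ since $n \geq 3$. Applying Theorem \ref{thm:Llave} to the conjugacy $h$ then gives that $h$ restricts to a $C^\infty$ diffeomorphism on every strong stable leaf. To obtain global $C^\infty$-smoothness of $h$ from the last clause of Theorem \ref{thm:Llave}, it remains to prove $\Phi_\rho$ is $u$-conformal; by Proposition \ref{prop:contact Anosov} it suffices to produce a $C^1$-contact form preserved by $\Phi_\rho$.

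This last step is the main anticipated obstacle. The natural candidate is the pullback $h^*\alpha_0$ of the standard $\Phi_{\rho_0}$-invariant contact form $\alpha_0$ on $N_\Gamma$ (the one whose kernel is $E^-_{\rho_0} \oplus E^+_{\rho_0}$). It is automatically $\Phi_\rho$-invariant, smooth along the flow direction, and, using the leafwise regularity of $h$ established above together with the $C^\infty$ regularity of $E^-_\rho$ furnished by the $N$-action of $\rho$, smooth along the strong stable foliation $\cF^{ss}_\rho$. Upgrading $h^*\alpha_0$ to be smooth transverse to the weak stable foliation is the central technical difficulty; one expects to proceed by a bootstrap using the smooth Livschitz Theorem \ref{thm:Livschitz} applied to a cocycle built from $\alpha_0$, or by a more direct argument exploiting the algebraic structure of the deformation. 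Once a $C^1$-contact form for $\Phi_\rho$ is obtained, Proposition \ref{prop:contact Anosov} supplies $u$-conformality, and the final clause of Theorem \ref{thm:Llave} yields that $h$ is globally $C^\infty$.
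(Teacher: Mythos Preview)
Your plan has the right high-level shape but contains two genuine gaps, and the paper's argument differs substantially in how it circumvents them.

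First, the period-matching step is not justified. You assert that stable eigenvalues at corresponding periodic orbits are ``invariants under the orbit equivalence'', but eigenvalues of return maps are smooth, not topological, invariants; a H\"older orbit equivalence $h_0$ gives no relation between $e^{-T}$ and $e^{-\sigma(x,T)}$. Second---and you acknowledge this yourself---the argument for $u$-conformality of $\Phi_\rho$ via the pullback $h^*\alpha_0$ is circular: you need $h$ to be at least $C^1$ to make $h^*\alpha_0$ a $C^1$ contact form, but you need that contact form in order to prove $h$ is $C^1$. The vague bootstrap suggestion does not break this loop.

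The paper resolves both problems with a single device: it introduces two auxiliary $C^\infty$ Anosov flows $\Psi_1$, $\Psi_2$ obtained by projecting $X_{\rho_0}$ and $X_\rho$ so that $\Psi_1$ shares the strong unstable foliation $\cF^{uu}_{\rho_0}$ with $\Phi_{\rho_0}$ while $\Psi_2$ shares the strong stable foliation $\cF^{ss}_\rho$ with $\Phi_\rho$. Proposition~\ref{prop:fol conj} then produces homeomorphic conjugacies $h_1,h_2$ each preserving every leaf of a fixed smooth foliation; this exact leaf-preservation is what makes the transverse Jacobian determinants at periodic points well-defined and comparable even though $h_1,h_2$ are only $C^0$, yielding $\alpha(x,T)=T$ rigorously. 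Because $\Psi_1$ and $\Psi_2$ have the same orbits, the cocycle $\alpha$ relating them is $C^\infty$, so the $C^\infty$ Livschitz theorem gives a $C^\infty$ diffeomorphism $h_3$ conjugating $\Psi_2$ to $\Psi_1$, $C^\infty$-close to the identity. Then $Dh_3(E^-_\rho)\oplus E^+_{\rho_0}$ is a direct sum of two $C^\infty$ subbundles, $C^1$-close to the known contact structure $E^-_{\rho_0}\oplus E^+_{\rho_0}$, hence contact by openness---so $\Psi_1$ is contact Anosov without any appeal to regularity of $h_1$ or $h_2$. Proposition~\ref{prop:contact Anosov} and Theorem~\ref{thm:Llave} then upgrade $h_1$ and subsequently $h_2$ to $C^\infty$, and $h=h_2^{-1}\circ h_3^{-1}\circ h_1$ is the desired smooth conjugacy. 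The introduction of $\Psi_1,\Psi_2$ is precisely the missing idea in your outline.
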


Choose $\rho \in \cA_*(M_\Gamma,P)$ such that
 $\Phi_\rho$ is an $s$-conformal Anosov flow with respect to $g_\rho$
 and $E^-_\rho$ is transverse to $T\Phi_{\rho_0} \oplus E^+_{\rho_0}$.
By $\cF^{ss}_{\rho_0}$, $\cF^s_{\rho_0}$,
 $\cF^{uu}_{\rho_0}$, $\cF^u_{\rho_0}$,
 we denote the strong stable,
 strong unstable, weak stable, weak unstable foliations
 of $\Phi_{\rho_0}$, respectively.
Similarly, by $\cF^{ss}_\rho$, $\cF^s_\rho$,
 we denote the strong stable and weak stable foliations of $\Phi_\rho$,
 respectively.
Remark that all of them are $C^\infty$ foliations,
 but the strong unstable and weak unstable foliations
 of $\Phi_\rho$ may not be $C^\infty$.

By $X_{\rho_0}$ and $X_\rho$, we denote the vector fields
 generating the flows $\Phi_{\rho_0}$ and $\Phi_\rho$.
Let $\sigma_1:TN_\Gamma \ra E^+_{\rho_0}$
 and $\sigma_2:TN_\Gamma \ra E^-_\rho$
 be the projection with respect to the splittings
 $E^+_{\rho_0} \oplus (T\Phi_\rho \oplus E^-_\rho)$ and
 $E^-_\rho \oplus (T\Phi_{\rho_0} \oplus E^+_{\rho_0})$
 of $TN_\Gamma$, respectively.
Put $X_1=X_{\rho_0}-\sigma_1(X_{\rho_0})$
 and $X_2=X_\rho - \sigma_2(X_\rho)$.
They generates flows $\Psi_1$ and $\Psi_2$ on $N_\Gamma$.
If $\rho$ is sufficiently $C^\infty$-close to $\rho_0$,
 then $\Psi_1$ and $\Psi_2$ are $C^\infty$-close to $\Phi_{\rho_0}$.
Hence, we may assume that they are Anosov flows.
Since $[X_1, E^+_{\rho_0}] \subset E^+_{\rho_0}$
 and $X_1 \in T\Phi_\rho \oplus E^-_\rho$,
 we have
\begin{equation}
\label{eqn:Psi 1}
 \Psi_1^t(x) \in
 \cF^{uu}_{\rho_0}(\Phi_{\rho_0}^t(x)) \cap \cF^s_\rho(x).
\end{equation}
 for any $x \in M_\Gamma$ and $t \in \RR$.
If $\rho$ is sufficiently close to $\rho_0$,
 then $D\Psi_1$ expands $E^+_{\rho_0}$ uniformly.
So, we may assume that $E^+_{\rho_0}$
 is the strong unstable subbundle of $\Psi_1$.
Similarly,
 we may assume 
\begin{equation}
\label{eqn:Psi 2}
\Psi_2^t(x) \in \cF^{ss}_\rho(\Phi_\rho^t(x))\cap \cF^u_{\rho_0}(x)
\end{equation}
 for any $x \in N_\Gamma$ and $t \in \RR$,
 and $E^-_\rho$ is the strong stable subbundle of $\Psi_2$.
Since both $\Psi_1^t(x)$ and $\Psi_2^t(x)$ are
 contained in $\cF^u_{\rho_0}(x) \cap \cF^s_\rho(x)$,
 the orbits of $\Psi_1$ and $\Psi_2$ coincide.
Hence, there exists a $C^\infty$ cocycle over $\Psi_2$ such that
\begin{equation}
\label{eqn:cocycle alpha}
\Psi_2^t(x)=\Psi_1^{\alpha(x,t)}(x) 
\end{equation}
 for any $x \in N_\Gamma$ and $t \in \RR$.
Since each leaf of $\cF^u_{\rho_0}$ is $\Phi_{\rho_0}$-
 and $\Phi_{\rho}$-invariant and
 it is transverse to both $E^-_{\rho_0}$ and $E^-_{\rho}$,
 we have
\begin{equation}
\label{eqn:cocycle alpha 1}
\det D\Psi_1^{\alpha(x,T)}|_{E^-_{\rho_0}(x)} 
 =\det D\Psi_2^T|_{E^-_\rho(x)}
\end{equation}
 for any $(x,T) \in N_\Gamma \times \RR$
 satisfying $\Psi_2(x)^T=x$.

By Proposition \ref{prop:fol conj},
 there exist a homeomorphism $h_1$ of $N_\Gamma$
 such that $\Psi_1^t \circ h_1=h_1 \circ \Phi_{\rho_0}^t$
 for any $t \in \RR$
 and $h_1(\cF^{uu}_{\rho_0}(x))=\cF^{uu}_{\rho_0}(x)$
 for any $x \in N_\Gamma$.
Since $h_1$ preserves each leaf of $\cF^u_{\rho_0}$,
 we have
\begin{equation}
\label{eqn:derivative 1}
 \det D\Psi_1^{T'}|_{E^-_\rho(x)} 
 =\det D\Phi_{\rho_0}^{T'}|_{E^-_{\rho_0}(h_1^{-1}(x))}
 =e^{-(n-1)T'}
\end{equation}
 for any $(x,T') \in N_\Gamma \times \RR$
 satisfying $\Psi_1^{T'}(x)=x$.

By Proposition \ref{prop:fol conj} again,
 there exist a homeomorphism $h_2$ of $N_\Gamma$
 such that $\Psi_2^t \circ h_2=h_2 \circ \Phi_\rho^t$
 for any $t \in \RR$
 and $h(\cF^{ss}_\rho(x))=\cF^{ss}_\rho(x)$ for any $x \in N_\Gamma$.
Since $\cF^u_{\rho_0}$ is a transversely conformal foliation,
 $\Psi_2$ is $s$-conformal.
By Theorem \ref{thm:Llave},
 the restriction of $h_2$ to each leaf of $\cF^{ss}_\rho$
 is smooth.
Hence, 
\begin{equation}
\label{eqn:derivative 2}
 \det D\Psi_2^T|_{E^-_\rho(x)} 
 =\det D\Phi_\rho^T|_{E^-_\rho(h_2^{-1}(x))}
 =e^{-(n-1)T}
\end{equation}
 for any $(x,T) \in N_\Gamma \times \RR$
 satisfying $\Psi_2^T(x)=x$.

By Equations (\ref{eqn:cocycle alpha 1}),
 (\ref{eqn:derivative 1}), and (\ref{eqn:derivative 2}), 
 we have $\alpha(x,T)-T=0$ for any $(x,T) \in N_\Gamma \times \RR$
 satisfying $\Psi_2^T(x)=x$.
By The $C^\infty$ Livschitz Theorem,
 there exists a $C^\infty$ function $\beta$ on $N_\Gamma$ such that
\begin{equation*}
\alpha(x,t)-t=\beta(\Psi_2^t(x))-\beta(x) 
\end{equation*}
 for any $x \in N_\Gamma$ and $t \in \RR$.
We define a map $h_3:N_\Gamma \ra N_\Gamma$ by $h_3(x)=\Psi_2^{-\beta(x)}$.
Remark that if $\rho$ is sufficiently $C^\infty$-close to $\rho_0$,
 then $\alpha$ is $C^\infty$-close to zero,
 and hence, we can choose $\beta$ so that it is $C^\infty$-close to $0$.
So, we may assume that $h_3$ is a $C^\infty$ diffeomorphism
 sufficiently $C^\infty$-close to the identity map.
Since $\Phi_{\rho_0}$ is a contact Anosov flow
 and the set of contact structures is open 
 in the space of $C^1$ hyperplane fields,
 we also may assume that
 $Dh_3(E^-_\rho) \oplus E^+_{\rho_0}$ is a contact structure.

By Equation (\ref{eqn:cocycle alpha}), we have
$\Psi_1^t \circ h_3 = h_3 \circ \Psi_2^t$.
In particular, $Dh_3(E^-_\rho)$ is the strong stable subbundle of $\Psi_1$.
Since $E^+_{\rho_0}$ is the strong unstable subbundle of $\Psi_1$,
 the flow $\Psi_1$ is a contact Anosov flow.
Since $\Psi_1$ is $s$-conformal,
 it is also $u$-conformal by Proposition \ref{prop:contact Anosov}.
By Theorem \ref{thm:Llave},
 $h_1$ is a $C^\infty$ diffeomorphism.

Since $\cF^s_{\rho_0}$ is a transversely conformal foliation,
 $\cF^s_\rho=h_1(\cF^s_{\rho_0})$ also is.
It implies that $\Phi_\rho$ and $\Psi_2$ are $u$-conformal.
Since $\Phi_\rho$ and $\Psi_2$ are $s$-conformal,
 Theorem \ref{thm:Llave} implies
 that $h_2$ is a $C^\infty$ diffeomorphism.
Now, we put $h=h_2^{-1} \circ h_3^{-1} \circ h_1$.
Then, $h$ is a $C^\infty$ diffeomorphism
 and $\Phi_\rho^t \circ h= h \circ \Phi_{\rho_0}^t$ for any $t \in \RR$.
The proof of Theorem \ref{thm:conj flow} is completed.

%
%  References
%

\end{document}